\newcommand{\Z}{\mathbb Z}
\newcommand{\R}{\mathbb R}
\def\Pk{{\mathcal P}_1}
\def\dk{{\bf d}_1}
\newcommand{\be}{\begin{equation}}
\newcommand{\ee}{\end{equation}}
\def\1{{\bf 1}}
\def\ep{\epsilon}
\def\Dt0{{\bf D}(t_0)}
\def\E{{\bf E}}
\def\to{\rightarrow}
\def\ds{\displaystyle}
\def\ds{\displaystyle}
\def \to{\rightarrow}
\def \states{\mathbb{T}^d}
\def \T{\mathbb{T}}
\def \R {\mathbb{R}}
\def \Z {\mathbb{Z}}
\def \mes{\mathcal{P}}
\def\Pk{\mes(\states)}
\def\dk{{\bf d}_1}
\def\dive{{\rm div}}
\def \ep{\varepsilon}
\def\E{\mathbb E}
\def\inte{\int_{\T^d}}
\definecolor{ProcessBlue}{cmyk}{1,0,0,0.40}
\newtheorem{Theorem}{Theorem}[section]
\newtheorem{Proposition}[Theorem]{Proposition}
\newtheorem{Lemma}[Theorem]{Lemma}
\newtheorem{Remark}{Remark}[section]
\newtheorem{Example}{Example}[section]
\title{On the (in)efficiency of MFG equilibria}
\author{Pierre Cardaliaguet\thanks{Universit\'{e} Paris-Dauphine, PSL Research University, Ceremade. cardaliaguet@ceremade.dauphine.fr} \and Catherine Rainer\thanks{Universit\'{e} de Bretagne Occidentale, LMBA. Catherine.Rainer@univ-brest.fr
}}
\begin{document}

\maketitle




\begin{abstract} Mean field games (MFG) are dynamic games with infinitely many infinitesimal agents. In this context, we study the efficiency of Nash MFG equilibria: Namely, we compare the social cost of a MFG equilibrium with the minimal cost a global planner can achieve. We find a structure condition on the game under which there exists efficient MFG equilibria and, in case this condition is not fulfilled, quantify how inefficient  MFG equilibria are. 
\end{abstract}

\section{Introduction}

Mean field games (MFG) study Nash equilibria in differential games with infinitely many  indistinguishable agents. In this note, we investigate the classical question of the efficiency for these Nash equilibria:  we compare the social cost corresponding to a MFG equilibrium with the optimal social cost obtained by a global planner.  

To fix the ideas and describe the model we have in mind, we start with a  finite horizon differential game played by a large number of agents (say $N$). Agent $i\in \{1, \dots, N\}$ controls her dynamics: 
$$
\left\{\begin{array}{l}
dX^i_t=\alpha^i_t dt + \sqrt{2}dB^i_t , \qquad t\in [0,T],\\
X^i_0 = x^i_0
\end{array}\right.
$$
where $T$ is the horizon, the $(B^i)$ are $N$ independent Brownian motions and  the $(x^i_0)$ are  $N$ i.i.d. random variables on $\R^d$,  independent of the $(B^i)$,  with law $m_0$. 
The control $(\alpha^i)$ is chosen by agent $i$ in order to minimize a  cost of the form
$$
J^i(\alpha^1, \dots, \alpha^N) = \E\left[\int_0^T L(X^i_t, \alpha^i_t,m^N_{{\bf X}_t}) \ dt+G(X^i_T,m^N_{{\bf X}_T})\right],
$$
where 
$m^N_{{\bf X}_t}=\frac{1}{N}\sum_{j=1}^N\delta_{X^j_t}$ is the empirical measure of the players. We assume that dynamics and costs have a special structure: agent $i$ controls directly her own drift and her running cost at time $t$ depends on her position $X^i_t$, on her control $\alpha^i_t$ and on the empirical measure of all players $m^N_{{\bf X}_t}$; her terminal cost depends on her position $X^i_T$ at the terminal time $T$ and on the empirical measure $m^N_{{\bf X}_T}$ at that time. Note that, under our assumptions, the agents have symmetric dynamics and costs functions. 

The social cost associated with the $N$ agents is the average of the $J^i$: 
$$
J(\alpha^1, \dots, \alpha^N):= \frac{1}{N} \sum_{i=1}^N J^i(\alpha^1, \dots, \alpha^N). 
$$

{\it Global planning.} If there is a global planner, the cost to minimize is $J$, over the adapted controls $(\alpha^1, \dots, \alpha^N)$. This minimum could be computed by standard tools (for instance, a Hamilton-Jacobi equation in $\R^{Nd}$). However, we are  interested in the value of the minimum when the number of agents is large. It is proved in \cite{lacker2017limit}, in a much more general context, that 
$$
\lim_{N\to+\infty} \inf_{\alpha^1, \dots, \alpha^N} J(\alpha^1, \dots, \alpha^N) = {\mathcal C}^*,
$$
where ${\mathcal C}^*$ is the cost associated with the McKean-Vlasov control problem: 
$$
{\mathcal C}^*:= \inf_{(m,\alpha)} \int_0^T\int_{\R^d}  L(x,\alpha(t,x),m(t)) m(t,x)\  dxdt+ \int_{\R^d} G(x,m(T))m(T, x)dx
$$
and where the infimum is taken over the pairs $(m,\alpha)$ such that 
$$
\partial_t m -\Delta m+\dive(m\alpha)=0,\qquad 
m(0,x)=m_0(x).
$$
The quantity ${\mathcal C}^*$ is  our first main object of investigation. We interpret it as the {\it social cost associated with a global planner}. 

Following \cite{lasry2007mean} (see also \cite{bensoussan2013mean, briani2018stable} and Lemma \ref{lem.OC} below),  and under suitable conditions stated below, the above problem for ${\mathcal C}^*$ has a minimum $(\hat m,\hat \alpha)$ and there exists a map $\hat u$ such that $(\hat u,\hat m)$ solves the forward-backward system 
\be\label{eq:MFGcnintro}
\left\{ \begin{array}{l}
\ds -\partial_t \hat u -\Delta \hat u+H(x,D\hat u,\hat m(t))= \int_{\R^d} \frac{\delta L}{\delta m}(y,\hat \alpha(t,y), x, \hat m(t))\hat m(t,y)dy\; {\rm in}\; (0,T)\times \R^d\\
\ds \partial_t \hat m -\Delta \hat m- \dive(\hat mD_pH(x,D\hat u(t,x),\hat m(t)))=0 \qquad {\rm in}\; (0,T)\times \R^d\\
\ds \hat m(0,x)=m_0(x), \; \hat u(T,x)= \frac{\delta \widehat{{\mathcal G}}}{\delta m}(\hat m(T),x)\qquad {\rm in}\; \R^d
\end{array}\right.
\ee
where  $H(x,p,m)=\sup_{\alpha\in \R^d} -\alpha\cdot p -L(x,\alpha,m)$ is the convex conjugate of $L$ and where we have denoted by  
$$
\hat \alpha(t,x)=- D_pH(x,D\hat u(t,x),\hat m(t))
$$
the optimal feedback control of the global planner. The map $\widehat{{\mathcal G}}$ is  defined  by 
\be\label{defMathcalFGIntro}
\widehat{{\mathcal G}}(m):= \int_{\R^d} G(x,m)m(dx)
\ee
while $\delta L/\delta m$ and $\delta \hat {\mathcal G}/\delta m$ are the derivatives of the maps $m\to L(x,\alpha,m)$ and  $m\to \hat {\mathcal G}(m)$, respectively, with respect to the measure variable $m$  (see Section \ref{sec:2}). 
\bigskip

{\it Decentralized setting.} When there is no cooperation between the agents, one expects them to play a Nash equilibrium. The characterization of Nash equilibria (in memory strategy) is known in this setting \cite{buckdahn2004nash, kononenko1976equilibrium} and related to the Folk's Theorem (any feasible and individually rational payoff can be achieved as a Nash equilibrium). However, when the number $N$ of agents is large and the agents are indistinguishable, it is not reasonable to ask all the agents to observe each other: the notion of memory strategy (or even of global feedback strategy) does not seem to make much sense. One would expect the agent to act instead by taking into account their own position and the distribution of the position of other agents: this is precisely what  mean field games formalize. 

\bigskip

{\it Mean field games.} Mean field games (MFG) model differential games with infinitely many indistinguishable players. They were introduced  by Lasry and Lions \cite{lasry2006jeuxi, lasry2006jeuxii, lasry2007mean}. At the same period, Huang, Caines and Malham\'e discussed the same concept under the terminology of ``Nash certainty equivalence principle" \cite{huang2003individual, huang2006large}. The MFG system associated with the above control  problem reads, in terms of PDEs, 
\be\label{MFGintro}
\left\{ \begin{array}{l}
-\partial_t u -\Delta u+H(x,Du, m(t))=0\qquad {\rm in}\; (0,T)\times \R^d\\
\partial_t m -\Delta m-\dive(mD_pH(x,Du,m(t)))=0\qquad {\rm in}\; (0,T)\times \R^d\\
m(0,x)=m_0(x), \; u(T,x)= G(x,m(T))\qquad {\rm in}\;  \R^d.
\end{array}\right.
\ee
 In the above system,  $u=u(t,x)$ is the value function of a typical player while $m=m(t,x)$ describes the evolving probability density of all agents. Note that the drift $-D_pH(x, Du(t,x))$ in the equation for $m$ corresponds to the optimal feedback of the agent. Heuristically, the pair $(u,m)$ describes a Nash equilibrium in the infinite population problem. 
\bigskip

The {\it social cost associated with a MFG equilibrium} $(u,m)$, which is the averaged cost of each player, can be defined  as: 
$$
{\mathcal C}(u,m):= \int_0^T\int_{\R^d}  L(x,\alpha^*(t,x),m(t)) m(t,x)\ dxdt + \int_{\R^d} G(x,m(T))m(T,x)dx,
$$
where $\alpha^*(t,x)= -D_pH(x,u(t,x))$ is the optimal feedback in the MFG. 
The quantity ${\mathcal C}(u,m)$ is the second main object of investigation of this paper. 
\bigskip 

{\it Comparison between the two problems.} The difference between the two problems---the centralized optimal control of McKean-Vlasov dynamics and the  MFG equililbria---has been often discussed in the literature: see, for instance, \cite{bensoussan2013mean, carmona2017probabilistic, carmona2015forward, carmona2013control,  huang2012social}.
So far the attention has focussed on the difference in structure between the two systems of equations (namely, for our problem, \eqref{eq:MFGcnintro} and \eqref{MFGintro}). Note that, in our specific setting, there is no real difference between \eqref{eq:MFGcnintro} and \eqref{MFGintro}: so one could expect that the two problems are very close in terms of social cost.

\bigskip 

{\it Comparison between ${\mathcal C}^*$ and ${\mathcal C}(u,m)$.} In this paper, we plan to compare the social costs ${\mathcal C}^*$ and ${\mathcal C}(u,m)$. Obviously one has ${\mathcal C}^*\leq {\mathcal C}(u,m)$. We want to understand a little better  the case of equality and the size of the difference ${\mathcal C}(u,m)-{\mathcal C}^*$. 

This question has been often addressed in the classical game theory: a characterization of efficiency can be found for instance in  \cite{dubey1986inefficiency}, which also proved that, generically, the Nash equilibria are not efficient. The problem became very popular  under the name of ``price of anarchy", introduced in \cite{koutsoupias1999worst} for noncooperative games in which agents share a common resource. We also refer for instance to  \cite{johari2005efficiency, johari2003network, roughgarden2002bad, roughgarden2004bounding} and the references therein, in the framework of  selfish routing games and congestion games. Related to our setting with infinitely many players, the recent paper \cite{lacker2017rare} discusses the price of anarchy for static games with a large number of players. 

This large literature is in sharp contrast with the literature on differential games, where efficiency has  seldom been investigated, and only recently:   \cite{bacsar2011prices}  estimates the price of anarchy in some scalar linear-quadratic (LQ) differential games; Directly related to our work, \cite{balandat2013efficiency} addresses the question of the inefficiency of MFG Nash equilibria by  numerical simulations. This question is also  discussed in \cite{CaGrTa2017}, in the settings of LQ MFG  and of MFG on finite Markov chains. 
    
\bigskip

{\it Main results.}  The main topic of our paper is the estimate of the difference between  ${\mathcal C}^*$ and ${\mathcal C}(u,m)$---in our  set-up, the ratio ${\mathcal C}^*/{\mathcal C}(u,m)$, generally used for the price of anarchy, does not seem to make much sense. To simplify a little the estimates, we work in the periodic setting (and therefore in the torus $\T^d=\R^d/\T^d$) instead of $\R^d$: We expect the similar results to hold for other boundary conditions, but the proof should be more technical. 

Our starting point is the obvious remark that the MFG system \eqref{MFGintro} describing $(u,m)$ and the system of necessary conditions \eqref{eq:MFGcnintro} characterizing the minimum for ${\mathcal C}^*$ are very close, and, in fact, are (almost) identical if 
$$
 \int_{\T^d} \frac{\delta L}{\delta m}(y,\hat \alpha(t,y), x, \hat m(t))\hat m(t,y)dy =0 \qquad {\rm and}\qquad
 \int_{\T^d} \frac{\delta G}{\delta m}(y,m(T),x)m(T,dy) =0.  
$$
It turns out that, if  a MFG equilibrium $(u,m)$ is efficient, i.e., if ${\mathcal C}(u,m)= {\mathcal C}^*$, then the  above equalities {\it must} hold (Proposition \ref{prop.eff}). 

When the above equalities do not hold, one may wonder how far MFG equilibria are from efficiency. This is precisely the aim of our main results (Theorem  \ref{thm:LB} and Theorem  \ref{thm:UB}) which give lower and upper bounds for the difference between ${\mathcal C}^*$ and ${\mathcal C}(u,m)$. The lower bound, stated in Theorem  \ref{thm:LB}, reads, for any $\ep>0$:  
 \begin{align*}
{\mathcal C}( u, m)-{\mathcal C}^*\geq & \; \;
 C_\ep^{-1}\Bigl(\int_\ep^{T-\ep} \inte \left[ \inte  \frac{\delta L}{\delta m}(x, \alpha^*(t,x),y,m(t)) m(t,x)dx\right]^2  dydt \Bigr)^2 \\
&\qquad \qquad\qquad\qquad  +   C^{-1}\Bigl(\inte \left[ \inte  \frac{\delta G}{\delta m}(x, m(T),y) m(T,x)dx\right]^2  dy \Bigr)^4,
\end{align*}
where $\alpha^*(t,x)=-D_pH(x,Du(t,x),m(t))$. The constants $C\geq 1$  depends on the regularity of the data and   $C_\ep\geq 1$ depends also on $\ep$. The presence of $\ep$ is technical and is related with the constraints at time $t=0$ (where $m(0)=m_0$) and $t=T$ (where $u(T,x)=G(x,m(T))$).

We are only able to obtain an upper bound for ${\mathcal C}( u, m)-{\mathcal C}^*$ under  additional assumptions: First we assume that $H$ has a separate form: $H=H_0(x,p)-F(x,m)$; Second, we suppose that $\hat {\mathcal G}$ (defined by \eqref{defMathcalFGIntro}) and $\hat {\mathcal F}$  (defined in  a similar way)  are convex (in which case the solution of the MFG system \eqref{eq:MFGcnintro} is unique, see \cite{lasry2007mean}). Then, in Theorem  \ref{thm:UB}, we  show the upper bound:
\begin{align*}
{\mathcal C}( u, m)-{\mathcal C}^*\leq & \;\;
 C \Bigl(\int_0^T \inte \left[ \inte  \frac{\delta F}{\delta m}(x ,y,m(t)) m(t,x)dx\right]^2  dydt \\
&\qquad \qquad\qquad\qquad  +   \inte \left[ \inte  \frac{\delta G}{\delta m}(x, m(T),y) m(T,x)dx\right]^2  dy \Bigr)^{1/2},
\end{align*}
where the constant $C$ depends on the regularity of $H$, $F$ and  $m_0$. As $\delta L/\delta m=\delta F/\delta m$ in the separate case, this lower bound is close to the upper bound given above (with a different exponent, though). We can conclude that, in this setting, the  size of the quantity $ \left\|\int_{\T^d} \frac{\delta F}{\delta m}(y,m,\cdot)m(dy)\right\|_{L^2}$ and $ \left\|\int_{\T^d} \frac{\delta G}{\delta m}(y,m,\cdot)m(dy)\right\|_{L^2}$ along the MFG equilibrium $(u,m)$ controls the difference  ${\mathcal C}(u,m)-{\mathcal C}^*$.

\bigskip

{\it Examples.} To fix the ideas, we assume  that the MFG system is separated: $H=H_0(x,p)-F(x,m)$ and has zero terminal condition: $G\equiv 0$.  We explain in Section \ref{sec.examples} through several examples, that our estimates roughly imply that MFG Nash equilibria are in general inefficient, at least unless the coupling has a very specific structure. 

On the positive side, we  prove  the existence of MFG systems which are {\it globally  efficient}, i.e., such that, for any initial condition $(t_0,m_0)$ there exists a MFG equilibrium $(u,m)$ starting from $(t_0,m_0)$ with ${\mathcal C}(u,m)={\mathcal C}^*$: More precisely,  we show in Theorem \ref{thm.eff} that 
a MFG system is globally  efficient if and only if 
$$
\inte \frac{\delta F}{\delta m}(y,m,x)m(dx)=0,  \qquad \forall (x,m)\in \T^d\times \Pk, 
$$
or, equivalently, if and only if one can write the coupling function $F$ in the form 
$$
F(x,m)= {\mathcal F}(m)+ \frac{\delta {\mathcal F}}{\delta m} (m,x), 
$$
for some map ${\mathcal F}={\mathcal F}(m)$. 
Moreover, one can check (Example \ref{ex0}) that such a coupling function $F$ genuinely depends on $m$ unless $ {\mathcal F}$ is affine.
However, the above structure on $F$ is seldom encountered in practice, and in general there exist (many) initial conditions for which there is no efficient MFG equilibria. 

This is the case for instance if $F=F(m)$ does not depend on $x$ or if  $F$ derives from a potential. In these two cases, the MFG system is globally  efficient  if and only if $F$ is constant (Examples \ref{ex1} and \ref{ex3}). Moreover, our bounds can be simplified in this setting: When $F$ does not depend on $x$, the lower bound for a MFG equilibrium can be rewritten in term of the Holder constant of the map $t\to F(m(t))$: 
$$
{\mathcal C}( u, m)-{\mathcal C}^*\geq 
C_\ep^{-1}\left\{ \sup_{t_1\neq t_2} \frac{\left| F(m(t_2))-F(m(t_1))\right|}{(t_2-t_1)^{1/2}} \right\}^4,
$$
where the supremum is taken over $t_1, t_2\in [\ep, T-\ep]$. 
When $F$ is potential (and thus, as explained in Example \ref{ex3}, ${\mathcal F}$ vanishes and thus is convex), the two inequalities can directly be  expressed  in function of $F$: 
\begin{align*}
C_\ep^{-1}\left(\int_\ep^{T-\ep} \inte \left[   F(y,m(t))\right]^2  dydt\right)^2  \leq \; {\mathcal C}( u, m)-{\mathcal C}^*
\leq C\left(\int_0^T \inte \left[   F(y,m(t)) \right]^2  dydt\right)^{1/2}.
\end{align*}

In the same way, one can show (Example \ref{ex2}) that the MFG equilibria associated with a coupling function of the form  $\ds \ F(x,m)=\int_{\T^d} \phi(x,y)m(dy) \ $,
for some smooth map $\phi:\T^d\times \T^d\to \R$, cannot be globally  efficient unless $\phi$ does not depend on $y$ (and therefore $F$ does not depend on $m$). \\

{\it Extension and limits.} Although we won't make it explicit, one can check that our results   generalize  to other MFG systems (for instance with
local coupling functions or to ergodic MFG systems). However we leave several questions unanswered. First we do not know if the upper bound also holds without our additional assumption. Our technique of proof does not seem to give much result in full generality or requires very restrictive assumptions (see Remark \ref{rem.rem1}). Second, our lower bound seems difficult to generalize to problems with more complex dynamics or for problems with  bounded controls: Indeed our approach strongly relies on the fact that the minimization problem for ${\mathcal C}^*$ has regular solutions, and this requires some assumptions. Finally let us strongly underline that our estimates have little to do with the universal estimates obtained in the context of the ``price of anarchy": Our bounds heavily depend on the regularity of the data and only show how the difference ${\mathcal C}(u,m)-{\mathcal C}^*$ is small or large in function of the specific quantities $ \left\|\int_{\T^d} \frac{\delta F}{\delta m}(y,m,\cdot)m(dy)\right\|_{L^2}$and $ \left\|\int_{\T^d} \frac{\delta G}{\delta m}(y,m,\cdot)m(dy)\right\|_{L^2}$. \\

The paper is organized as follows: In Section \ref{sec:2} we explain our main notations, state our standing assumptions and characterize the minimizers for ${\mathcal C}^*$ in terms of equation \eqref{eq:MFGcnintro}. Section \ref{sec.3} states necessary conditions and sufficient conditions for a MFG equilibrium to be efficient. In Section \ref{sec.4} and \ref{sec.5}, we quantify how far a MFG equilibrium is  from efficiency: Section \ref{sec.5} gives a lower bound and Section \ref{sec.5} an upper bound. We conclude by Section \ref{sec.examples} with the discussion on several examples. \\

{\bf Acknowledgement:} The authors were partially supported by the ANR (Agence Nationale de la Recherche) project ANR-16-CE40-0015-01.

\section{Assumptions and preliminary results} \label{sec:2}

\subsection{Notations and assumptions}

Throughout the paper we work with maps which are all periodic in space, or, in other words, on the $d-$dimensional torus $\T^d:=\R^d/\Z^d$: this simplifying assumption allows us to ignore problems related to  boundary issues or growth conditions of the data.  We denote by $\Pk$ the set of Borel probability measures on $\T^d$, endowed with the Monge-Kantorovitch distance $\dk$: 
$$
\dk(m,m')=\sup_{\phi} \inte \phi(m-m'),
$$
where the supremum is taken over all $1-$Lipschitz continuous maps $\phi:\T^d\to \R$. 

We will use the notion of derivative of a map $U:\Pk\to \R$ as introduced in \cite{cardaliaguet2015master}. We say that $U$ is $C^1$ if there exists a continuous map $\frac{\delta U}{\delta m}:\T^d\times \Pk\to \R$ such that 
$$
U(m')-U(m)=\int_0^1\inte \frac{\delta U}{\delta m}(x,  (1-t)m+tm')(m'-m)(dx) dt \qquad \forall m,m'\in \Pk.
$$
The above relation defines the map $\frac{\delta U}{\delta m}$ only up to a constant. {\it We always use the normalization convention}
\be\label{eq:convention}
\inte \frac{\delta U}{\delta m}(x, m)dm(x)=0 \qquad \forall m\in \Pk.
\ee

If $u:\T^d\times [0,T]\to \R$ is a sufficiently smooth map, we denote by $Du(x,t)$ and $\Delta u(x,t)$ its spatial gradient and spatial Laplacian and by $\partial_t u(x,t)$ its partial derivative with respect to the time variable. 
\bigskip

\noindent {\bf Assumptions.}  The following assumptions are in force throughout the paper. 

\begin{itemize}
\item The Lagrangian $L=L(x,\alpha,m):\T^d\times \R^d\times \Pk \to \R$
is of class $C^2$ with respect to all variables and satisfies 
\be\label{hyp.CvL}
C^{-1}I_d \leq D^2_{pp}L(x,\alpha,m)\leq CI_d.
\ee
We also suppose that 
\be\label{hyp.growth1}
\left|\frac{\delta L}{\delta m} (x,p,m,y)\right|+ \left|\frac{\delta^2 L}{\delta m^2} (x,p,m,y,z)\right|\leq C(1+|p|^2), 
\ee
\be\label{hyp.growth2}
\left|D_\alpha \frac{\delta L}{\delta m} (x,p,m,y)\right|\leq C(1+|p|), \qquad
\left|D^2_{\alpha}L(x,p,m)\right|\leq C. 
\ee
We define the convex conjugate $H$ of $L$ as
$$
L(x,p,m)= \sup_{\alpha\in \R^d} \{ -p\cdot \alpha-L(x,\alpha,m)  \},
$$
and we assume that $H$ is of class $C^2$ as well. Note that $H$ also satisfies: 
\be\label{cond:Hcoercive}
C^{-1}I_d \leq D^2_{pp}H(x,p,m)\leq CI_d.
\ee

\item The coupling function $G: \T^d\times \Pk\to \R$ is globally Lipschitz continuous with space derivatives $\partial_{x_i} G: \T^d\times \Pk\to \R$ also Lipschitz continuous. We also assume that the map $G$ is $C^2$ with respect to $m$ and that its  derivatives $\frac{\delta G}{\delta m}:\T^d\times \Pk\times \T^d\to \R$ and $\frac{\delta^2 G}{\delta m^2}:\T^d\times \Pk\times \T^d\times \T^d\to \R$ are Lipschitz continuous. 
\end{itemize}

We will say below that a constant depends on the regularity of the data if it depends on the horizon $T$, dimension  $d$, on the $C^2$ regularity of $H$, on the constant $C$ in \eqref{cond:Hcoercive}, on the bound on  $G$ and on the modulus of Lipschitz continuity of $\delta G/\delta m$ and of $\delta^2G/\delta m^2$.\\

It will be convenient to set 
\be\label{eq.defmathcalFG}
\hat {\mathcal G}(m):= \inte G(x,m)m(dx), \; \qquad \forall m\in \Pk. 
\ee
Let us compute, for later use, $\delta \widehat{{\mathcal G}}/\delta m$: 

\begin{Lemma}\label{lem:calcul:DF} We have 
\be\label{calcul:DF}
\frac{\delta \widehat{{\mathcal G}}}{\delta m}(m,y)= \int_{\T^d} \frac{\delta G}{\delta m}(x,m,y)m(dx) + G(y,m)-\inte G(x,m)m(dx). 
\ee
\end{Lemma}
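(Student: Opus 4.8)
The plan is to compute the derivative $\delta \widehat{\mathcal G}/\delta m$ directly from the definition, treating $\widehat{\mathcal G}(m) = \inte G(x,m)m(dx)$ as a product of two $m$-dependent objects. The natural approach is to differentiate via the curve $t \mapsto m_t := (1-t)m + t m'$ and identify the integrand of the linear term. First I would write, for fixed $m, m' \in \Pk$,
\[
\widehat{\mathcal G}(m') - \widehat{\mathcal G}(m) = \inte G(x,m') m'(dx) - \inte G(x,m) m(dx),
\]
and I would split this difference into two pieces by inserting the intermediate term $\inte G(x,m) m'(dx)$: one piece in which the measure against which we integrate changes (the ``explicit'' dependence on $m$ through the integrating measure) and one piece in which the argument of $G$ changes (the dependence through $G(x,\cdot)$).

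For the first piece, $\inte G(x,m)\,(m'-m)(dx)$, this is already linear in $m'-m$, so it contributes $G(y,m)$ to the integrand against $(m'-m)(dy)$. Here I would invoke the normalization convention \eqref{eq:convention}: since $\delta \widehat{\mathcal G}/\delta m$ must satisfy $\inte \frac{\delta \widehat{\mathcal G}}{\delta m}(m,y) m(dy) = 0$, the term $G(y,m)$ has to be recentered, which is exactly why the formula \eqref{calcul:DF} contains $G(y,m) - \inte G(x,m)m(dx)$ rather than $G(y,m)$ alone. For the second piece, $\inte \big(G(x,m') - G(x,m)\big) m'(dx)$, I would apply the definition of $\delta G/\delta m$ along the segment $m_t$, writing $G(x,m') - G(x,m) = \int_0^1 \inte \frac{\delta G}{\delta m}(x,m_t,z)(m'-m)(dz)\,dt$, and integrate against $m'(dx)$.

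To pass from these expressions to the clean formula \eqref{calcul:DF}, I would observe that the defining relation only requires producing \emph{some} valid integrand, and then verify that the candidate
\[
\frac{\delta \widehat{\mathcal G}}{\delta m}(m,y) = \inte \frac{\delta G}{\delta m}(x,m,y)m(dx) + G(y,m) - \inte G(x,m)m(dx)
\]
satisfies both the integral identity and the normalization \eqref{eq:convention}; uniqueness up to an additive constant, together with the normalization, then pins down the answer. The normalization check is immediate: integrating the right-hand side against $m(dy)$, the last two terms cancel, and the first term vanishes by the convention \eqref{eq:convention} applied to $\delta G/\delta m$ (using that $G$ is normalized so that $\inte \frac{\delta G}{\delta m}(x,m,y)m(dy)=0$).

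The only real subtlety, and hence the main point requiring care rather than a genuine obstacle, is correctly accounting for the \emph{two} sources of $m$-dependence and combining the first-order contributions consistently: the flow-derivative computation produces cross terms evaluated at the intermediate measures $m_t$, and one must justify that, to first order, these reduce to the stated integrand evaluated at $m$. This is a routine Leibniz-type argument justified by the $C^2$ regularity of $G$ in $m$ assumed in the standing hypotheses (in particular the Lipschitz continuity of $\delta G/\delta m$ and $\delta^2 G/\delta m^2$), which guarantees the requisite continuity and differentiability along the segment. I expect no substantive difficulty beyond this bookkeeping.
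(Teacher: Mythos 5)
Your proposal is correct and takes essentially the same route as the paper: both differentiate $\widehat{{\mathcal G}}$ along the segment $(1-s)m+sm'$, split the increment into the piece where the integrating measure varies (giving $G(y,m)$) and the piece where the measure argument of $G$ varies (giving $\inte \frac{\delta G}{\delta m}(x,m,y)m(dx)$), and then use the normalization convention \eqref{eq:convention} to recenter by subtracting $\inte G(x,m)m(dx)$. Your final normalization check, using \eqref{eq:convention} applied to $\delta G/\delta m$ itself, matches the paper's concluding step, so there is no gap.
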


\begin{proof} For $m'\in \Pk$, we have 
\begin{align*}
& \lim_{s\to 0^+} \frac{1}{s}(\widehat{{\mathcal G}}((1-s)m+sm')-\widehat{{\mathcal G}}(m)) \\
& \qquad = \lim_{s\to 0^+} \frac{1}{s}\left\{ \int_{\T^d} G(x,((1-s)m+sm')(((1-s)m+sm')(dx)-\int_{\T^d} G(x,m)m(dx)\right\}\\
& \qquad = \lim_{s\to 0^+} \int_{\T^d}  \frac{1}{s}\left\{G(x,((1-s)m+sm')-G(x,m)\right\} m(dx) +\int_{\T^d} G(x,m)(m'-m)(dx) \\
& \qquad =  \int_{\T^d\times \T^d}  \frac{\delta G}{\delta m}(x,m,y)(m'-m)(dy) m(dx) +\int_{\T^d} G(x,m)(m'-m)(dx).
\end{align*}
This implies the claim in view of Convention \eqref{eq:convention}. 
\end{proof}

\subsection{An optimality condition}

We now investigate optimality conditions for the problem, written in an unformal way as 
$$
\min_{(m,w)} \int_{t_0}^T\inte L(x,\frac{w}{m}(t,x),m(t))m(t,x) dxdt + \hat {\mathcal G}(m(T)). 
$$
under the constraint 
$$
\partial_tm -\Delta m +\dive(w)= 0\; {\rm in}\ (t_0,T)\times \T^d, \qquad m(t_0)=m_0\; {\rm in}\ \T^d .
$$
We recall how to give a rigorous meaning to the following expression. We denote by ${\mathcal E}(t_0)$ the  set of  time-dependent Borel measures $(m(t),w(t))\in \Pk\times {\mathcal M}(\T^d,\R^d)$ such that $t\to m(t)$ is continuous,
$$
\int_{t_0}^T |w(t)|dt <\infty, 
$$
and equation
$$
 \partial_t m-\Delta m +\dive(w)= 0 \; {\rm in }\;  [t_0,T]\times \T^d, \:  \: m(t_0)=m_0
 $$
holds in the sense of distribution. We also denote by ${\mathcal E}_2(t_0)$ the subset of $(m(t),w(t))\in {\mathcal E}(t_0)$ such that $w(t)$ is absolutely continuous with respect to $m(t)$ with a density $\frac{dw(t)}{dm(t)}$ satisfying 
$$
\inte  \int_{t_0}^T \left| \frac{dw(t)}{dm(t)}(x)\right|^2 m(dx,t)dt<\infty.
$$
Then we define $J$ on ${\mathcal E}(t_0)$ by 
$$
J(m,w):=
\left\{
\begin{array}{l}
\displaystyle  \int_{t_0}^T\inte L\left(x, \frac{dw(t)}{dm(t)}(x),m(t)\right)m(dx,t) dt+ \hat {\mathcal G}(m(T)) \\
\qquad \qquad  \qquad \qquad \qquad \qquad \qquad \qquad \qquad \qquad \qquad \qquad   \mbox{ if  $(m,w)\in {\mathcal E}_2(t_0)$,}   \\
  \\
   +\infty \qquad \qquad \qquad \qquad \qquad \qquad  \mbox{ otherwise.} 
\end{array}
\right.
$$
We now explain that minimizers of the functional $J$ correspond to solutions of the MFG system. This remark was first pointed out in \cite{lasry2007mean}  and frequently used  since then in different contexts.

\begin{Lemma}\label{lem.OC} Under our standing assumptions, the above problem has at least one solution. Moreover, for any solution $(\hat m,\hat w)$, there exists $\hat u$ such that the pair $(\hat u,\hat m)$  is a classical solution to
\be\label{MFGSocMin}
\left\{ \begin{array}{l}
\ds -\partial_t \hat u -\Delta \hat u+H(x,D\hat u,\hat m(t))= \int_{\R^d} \frac{\delta L}{\delta m}(y,\hat \alpha(t,y), x, \hat m(t))\hat m(t,y)dy 
\; {\rm in}\; (0,T)\times \T^d\\
\ds \partial_t \hat m -\Delta \hat m- \dive(\hat mD_pH(x,D\hat u(t,x),\hat m(t)))=0 \qquad {\rm in}\; (0,T)\times \T^d\\
\ds \hat m(0,x)=m_0(x), \; \hat u(T,x)= \frac{\delta \widehat{{\mathcal G}}}{\delta m}(\hat m(T),x)\qquad {\rm in}\; \T^d\\
\ds \hat \alpha(t,x)= D_pH(x,D \hat u(t,x), \hat m(t))  \qquad {\rm in}\; (0,T)\times \T^d,
\end{array}\right.
\ee
where $H(x,p,m)= \sup_{\alpha\in \R^d}( - \alpha\cdot p- L(x,\alpha,m))$ and 
\be\label{repw}
\hat w(t,x)= -\hat m(t,x) D_pH(x, D\hat u(t,x),\hat m(t)).
\ee
\end{Lemma}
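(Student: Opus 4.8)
The plan is to establish this in two stages: first the existence of a minimizer for $J$ on $\ER(t_0)$, then the derivation of the optimality system \eqref{MFGSocMin} together with the representation \eqref{repw}. For existence, I would use the direct method of the calculus of variations. Take a minimizing sequence $(m_n,w_n)\in\ER_2(t_0)$; from the coercivity \eqref{hyp.CvL} (equivalently the lower bound in \eqref{cond:Hcoercive}), the $L^2$-norms $\int_{t_0}^T\inte |dw_n/dm_n|^2\,dm_n\,dt$ are uniformly bounded along the sequence (once one checks $J$ is not identically $+\infty$, e.g. by testing with the heat-flow $w\equiv 0$). This bound is exactly the kind that makes the set $\{(m,w):J(m,w)\le c\}$ compact in the weak topology of measures: the continuity equation propagates $H^{-1}$-type bounds so that $t\mapsto m_n(t)$ is equicontinuous for $\dk$, giving a uniformly convergent subsequence $m_n\to\hat m$ in $C([t_0,T],\Pk)$, while $w_n\rightharpoonup\hat w$ weakly as measures. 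The constraint $\partial_t m-\Delta m+\dive(w)=0$ is linear and passes to the limit in the sense of distributions, so $(\hat m,\hat w)\in\ER(t_0)$. Lower semicontinuity of the running cost follows from convexity of $\alpha\mapsto L(x,\alpha,m)$ (again \eqref{hyp.CvL}) via a standard Ioffe-type theorem, and $\hat{\mathcal G}$ is continuous in $m$ under the assumptions on $G$; hence $J(\hat m,\hat w)\le\liminf J(m_n,w_n)$, so $(\hat m,\hat w)$ is a minimizer.

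For the optimality system, the natural route is a convex-duality / first-variation argument. I would fix a minimizer $(\hat m,\hat w)$ and perform admissible perturbations $(\hat m+s\,\mu,\hat w+s\,\nu)$ where $(\mu,\nu)$ solves the linearized (homogeneous-at-$t_0$) continuity equation $\partial_t\mu-\Delta\mu+\dive(\nu)=0$, $\mu(t_0)=0$. Differentiating $J$ at $s=0$ and using the definition of $\delta\hat{\mathcal G}/\delta m$ together with $\delta L/\delta m$ yields a variational inequality; because the perturbations form a linear space, it becomes the Euler--Lagrange identity
\[
\int_{t_0}^T\!\!\inte\Bigl[D_\alpha L(x,\hat\alpha,\hat m)\cdot\nu+\inte\tfrac{\delta L}{\delta m}(y,\hat\alpha(t,y),x,\hat m(t))\hat m(t,y)\,dy\,\mu\Bigr]dx\,dt+\inte\tfrac{\delta\hat{\mathcal G}}{\delta m}(\hat m(T),x)\mu(T,dx)=0 .
\]
Introducing the adjoint variable $\hat u$ as the solution of the backward Hamilton--Jacobi equation (the first line of \eqref{MFGSocMin}) with terminal datum $\hat u(T,x)=\delta\hat{\mathcal G}/\delta m(\hat m(T),x)$, and integrating the HJ equation against $\mu$ while integrating the linearized continuity equation against $\hat u$, the boundary and coupling terms cancel. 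What survives forces the pointwise relation $D_\alpha L(x,\hat\alpha(t,x),\hat m(t))=-D\hat u(t,x)$, i.e. $\hat\alpha=-D_pH(x,D\hat u,\hat m)$ by convex duality, which is precisely the optimal feedback. Feeding this back into the continuity equation gives the Fokker--Planck equation (second line of \eqref{MFGSocMin}) and the representation $\hat w=-\hat m\,D_pH(x,D\hat u,\hat m)$ of \eqref{repw}.

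The delicate point, and the one I expect to be the main obstacle, is regularity: the statement asserts that $(\hat u,\hat m)$ is a \emph{classical} solution, so I cannot work purely at the level of distributions. The strategy is a bootstrap. Once the optimal drift $\hat\alpha=-D_pH(x,D\hat u,\hat m)$ is identified, the a priori $L^2$-bound on $\hat\alpha$ in $m$-measure gives $\hat w\in L^2$, hence $\hat m$ solves a Fokker--Planck equation with an $L^2$ drift; parabolic regularity upgrades $\hat m$ to a bounded, then Hölder-continuous density, bounded away from degeneracy thanks to the non-degenerate diffusion $-\Delta$. With $\hat m$ Hölder, the right-hand side of the HJ equation (the coupling integral) becomes Hölder in $(t,x)$ by the growth and continuity assumptions \eqref{hyp.growth1}--\eqref{hyp.growth2} on $\delta L/\delta m$, so Schauder estimates give $\hat u\in C^{1,2}$; this in turn makes the drift $D_pH(x,D\hat u,\hat m)$ Hölder, and a further application of Schauder theory to the Fokker--Planck equation yields $\hat m\in C^{1,2}$ as well. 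Iterating closes the loop and delivers classical regularity. I would point out that this bootstrap is where the $C^2$ hypotheses on $H$, the uniform ellipticity \eqref{cond:Hcoercive}, and the Lipschitz regularity of $\delta G/\delta m$ are genuinely used, and that the whole argument is standard in the MFG literature (as the statement itself notes, following \cite{lasry2007mean}), so I would either cite those references for the regularity step or sketch the bootstrap and relegate the quantitative estimates to standard parabolic theory.
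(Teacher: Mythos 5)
Your existence argument (direct method: coercivity from \eqref{hyp.CvL}, compactness of $t\mapsto m_n(t)$ in $C([t_0,T],\Pk)$, weak-$*$ convergence of $w_n$, convexity-based lower semicontinuity) is exactly the route of \cite{briani2018stable}, which the paper simply cites for this step, so that half is fine. The genuine gap is in your first-variation step: the claim that ``the perturbations form a linear space'' and hence that the variational inequality upgrades to an Euler--Lagrange \emph{identity} is false at this stage. The admissible class is only \emph{convex}, not affine: $\hat m+s\mu$ must remain a nonnegative measure and $\hat w+s\nu$ must remain absolutely continuous with respect to it with square-integrable density, which fails for $s<0$ (and even for $s>0$ wherever $\hat m$ vanishes); moreover $J$ is not differentiable there, since the integrand $(s,z)\mapsto s\,L(x,z/s,m)$ degenerates as $s\to 0^+$. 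You cannot rescue this by invoking positivity of $\hat m$, because at this point $\hat m$ is only a weak solution of a Fokker--Planck equation with drift in $L^2_{\hat m}$ --- its strict positivity and smoothness are part of the \emph{conclusion}, not a hypothesis. The same circularity infects your adjoint step: you define $\hat u$ as the classical solution of the backward Hamilton--Jacobi equation in \eqref{MFGSocMin} and integrate it against $\mu$, but with $\hat\alpha$ known only $\hat m$-a.e.\ in $L^2_{\hat m}$ the right-hand side is not regular enough for classical (or even obviously well-posed weak) solvability, so the bootstrap you describe has no starting point.

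The paper's proof is engineered precisely to avoid these obstructions: it compares $(\hat m,\hat w)$ with the convex combinations $((1-\lambda)\hat m+\lambda m,\ (1-\lambda)\hat w+\lambda w)$ for \emph{arbitrary} admissible competitors $(m,w)\in{\mathcal E}$ --- which are automatically admissible --- and exploits the joint convexity of $(s,z)\mapsto s\,L(x,z/s,m)$ to differentiate only the smooth $m$-dependence of $L$ and of $\hat{\mathcal G}$ as $\lambda\to 0^+$. This yields a one-sided variational inequality stating that $(\hat m,\hat w)$ minimizes the auxiliary \emph{local} convex functional ${\mathcal J}$ with frozen measure arguments; the existence of $\hat u$, the classical regularity of $(\hat u,\hat m)$, and the representation \eqref{repw} are then quoted from \cite{briani2018stable} (following \cite{lasry2007mean}), where they are established by convex duality for the linearized problem together with parabolic regularity --- only \emph{after} that duality does one know $\hat m>0$ and may read the optimality relation $D_\alpha L(x,\hat\alpha,\hat m)=-D\hat u$ pointwise. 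To repair your proof you would either have to reproduce that duality argument for ${\mathcal J}$, or restrict from the outset to one-sided convex perturbations as the paper does; as written, the passage from the variational inequality to the Euler--Lagrange identity does not go through.
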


As it has been often pointed out (see  \cite{bensoussan2013mean, carmona2017probabilistic, carmona2015forward, carmona2013control,  huang2012social} for instance), the above system does {\it not} correspond to a mean field game in general because of the extra term on the right-hand side of the Hamilton-Jacobi equation. 

The proof of Lemma \ref{MFGSocMin} is standard and has been described in \cite{briani2018stable} when $H=H_0(x,p)-F(x,m)$ has a separate form. We only explain  the main changes. 

\begin{proof} The existence of a solution can be established exactly as in \cite{briani2018stable}.  
Let now $(\hat  m,\hat  w)$ be a minimum of $J$. For any $(m,w) \in {\mathcal E}$ and $\lambda \in (0,1)$, we set 
$m_\lambda:=(1-\lambda)\hat {m}+\lambda m$, $ w_\lambda:=(1-\lambda)\hat {w}+\lambda w$. We have by minimality of $(\hat  m,\hat  w)$:
\be\label{alizrnedk}
\int_0^T \inte L(x, \frac{w_\lambda}{m_\lambda}, m_\lambda) m_\lambda +{\hat {\mathcal G}}(m_\lambda (T))  
\geq \int_0^T \inte L(x, \frac{\hat  w}{\hat  m}, \hat  m) \hat  m +{\hat {\mathcal G}}(\hat  m (T)) .
\ee
By the convexity condition of $L=L(x,\alpha,m)$ in \eqref{hyp.CvL}, the map $(s,z)\to L(x,s/z,m)s$ on $(0,+\infty)\times \R^d$ is convex for any fixed $x,m$. So we have 
$$
\int_0^T \inte L(x, \frac{w_\lambda}{m_\lambda}, m_\lambda) m_\lambda
\leq 
(1-\lambda)  \int_0^T \inte L(x, \frac{\hat  w}{\hat  m}, m_\lambda) \hat  m+
\lambda  \int_0^T \inte L(x, \frac{w}{m}, m_\lambda) m. 
$$
So we can rewrite \eqref{alizrnedk} as: 
\begin{align*}
&\lambda \left(\int_0^T \inte L(x, \frac{w}{m}, m_\lambda) m -  \int_0^T  \inte L(x, \frac{\hat  w}{\hat  m}, m_\lambda) \hat  m \right) \\
&\qquad \geq  \int_0^T \inte L(x, \frac{w_\lambda}{m_\lambda}, m_\lambda) m_\lambda - \int_0^T \inte L(x, \frac{\hat  w}{\hat  m}, m_\lambda) \hat  m \\
&\qquad \geq  -\left(  \int_0^T \inte L(x, \frac{\hat  w}{\hat  m}, m_\lambda) \hat  m - \int_0^T \inte L(x, \frac{\hat  w}{\hat  m}, \hat  m) \hat  m \right)-({\hat {\mathcal G}}(m_\lambda (T)) -{\hat {\mathcal G}}(\hat  m (T))) .
\end{align*}
Thus dividing by $\lambda>0$ and letting $\lambda \to 0^+$ we find, thanks to the regularity of $L$ and ${\hat {\mathcal G}}$: 
\begin{align*}
&\int_0^T \inte L(x, \frac{w(t,x)}{m(t,x)}, \hat  m(t)) \hat m(t,x)dxdt -  \int_0^T  \inte L(x, \frac{\hat  w(t,x)}{\hat  m(t,x)}, \hat  m(t)) \hat  m(t,x)dxdt  \\
&\qquad \geq  - \int_0^T \inte\inte \frac{\delta L}{\delta m} (x, \frac{\hat  w(t,x)}{\hat  m(t,x)}, y, \hat  m) \hat  m(t,x)(m(t,y)-\hat  m(t,y))dxdydt  \\
&\qquad  \qquad -\inte \frac{\delta {\hat {\mathcal G}}}{\delta m}(\hat  m(T),y)(m(T,y)-\hat  m (T,y))dy .
\end{align*}
This means that the pair $(\hat  m,\hat  w)$ is optimal for the (local) functional 
\begin{align*}
{\mathcal J}(m,w)& := 
\int_0^T \inte L(x, \frac{w(t,x)}{m(t,x)}, \hat  m(t)) m(t,x)dxdt  \\
&\qquad + \int_0^T \inte\inte \frac{\delta L}{\delta m} (x, \frac{\hat  w(t,x)}{\hat  m(t,x)}, y, \hat  m(t)) \hat  m(t,x)m(t,y)dxdydt  \\
&\qquad + \inte \frac{\delta {\hat {\mathcal G}}}{\delta m}(\hat  m(T),y)m(T,y)dy.
\end{align*}
We can then conclude exactly as in \cite{briani2018stable} that there exists $\hat  u$ such that $(\hat  u,\hat m)$  is a classical solution to the MFG system
\eqref{MFGSocMin} and that $\hat w=-\hat m D_pH(x, D\hat u,\hat m(t))$. 
 \end{proof}


\section{Efficiency of MFG equilibria}\label{sec.3}

Let $(t_0,m_0)\in [0,T]\times \Pk$ be an initial distribution and $(u,m)$ be the solution of the MFG system
\be\label{MFGt}
\left\{ \begin{array}{l}
-\partial_t u -\Delta u+H(x,Du, m(t))=0\qquad {\rm in}\; (t_0,T)\times \T^d\\
\partial_t m -\Delta m-\dive(mD_pH(x,Du,m(t)))=0\qquad {\rm in}\; (t_0,T)\times \T^d\\
m(t_0,x)=m_0(x), \; u(T,x)= G(x,m(T))\qquad {\rm in}\;  \T^d.
\end{array}\right.
\ee
The social cost associated with the equilibrium $(u,m)$ is defined by
$$
{\mathcal C}(u,m)= \int_{t_0}^T\int_{\T^d} \left\{ L(x,\alpha^*(t,x))+ F(x,m(t))\right\} m(t,x)\ dxdt+ \inte G(x,m(T))m(T,x)dx,
$$
where $\alpha^*(t,x):=-D_pH(x,Du(t,x))$ and $L(x,\alpha,m)=\sup_{p\in \R^d} (-\alpha\cdot p- H(x,p,m))$. 

We want to compare ${\mathcal C}(u,m)$
 with the social cost obtained by a global planner, which is defined as 
\be\label{SocMin}
{\mathcal C}^*:= \inf_{(m,\alpha)} \int_{t_0}^T\int_{\T^d} \left\{ L(x,\alpha(t,x),m(t))\right\} m(t,x)\  dxdt+ \inte G(x,m(T))m(T,x)dx,
\ee
where the infimum is taken over the pairs $(m,\alpha)$ such that 
\be\label{eq:ConstrT}
\partial_t m -\Delta m+\dive(m\alpha)=0,\;  {\rm in}\; (t_0,T)\times \T^d, \qquad
m(t_0,x)=m_0(x)\qquad  {\rm in}\;  \T^d. 
\ee
Although ${\mathcal C}^*$ depends on the initial position $(t_0,m_0)$, we will omit to write this dependence explicitly  to simplify the expressions. 

We say that an equilibrium $(u,m)$, solution of the MFG system \eqref{MFGt}, is {\it efficient} if 
$$
{\mathcal C}(u,m)={\mathcal C}^*.
$$
We say that the MFG system \eqref{MFGt} is {\it globally efficient} if,  for any initial position $(t_0,m_0)\in [0,T]\times \Pk$, there exists an efficient MFG equilibrium with initial position $(t_0,m_0)$. 

\subsection{A necessary condition for efficiency}

\begin{Proposition}\label{prop.eff} Let $(u,m)$ be a MFG equilibrium, i.e., a solution to \eqref{MFGt}. If $(u,m)$ is efficient, then, for any  $(t,x)\in [0,T]\times \T^d$,
$$
 \int_{\T^d} \frac{\delta L}{\delta m}(y,\alpha^*(t,y),x, m(t))m(t,y)dy=0\qquad {\rm and}\qquad \int_{\T^d} \frac{\delta G}{\delta m}(t,m(T),x)m(T,y)dy=0,
 $$
 where $\alpha^*(t,x):= -D_pH(x,Du(t,x),m(t))$. 
\end{Proposition}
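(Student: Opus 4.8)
The plan is to use the optimality characterization of Lemma \ref{lem.OC}. Efficiency, $\mathcal{C}(u,m)=\mathcal{C}^*$, means precisely that the admissible pair $(m,w)$ with $w:=m\alpha^*$ --- which satisfies \eqref{eq:ConstrT}, being exactly the second line of \eqref{MFGt} --- is a minimizer of the social cost functional $J$, since $\mathcal{C}(u,m)=J(m,w)$. Lemma \ref{lem.OC} then provides a map $\hat u$ such that $(\hat u,m)$ (with the \emph{same} $m$) is a classical solution of the global-planner system \eqref{MFGSocMin}, and such that the optimal momentum is represented by $w=-m\,D_pH(x,D\hat u,m(t))$.

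First I would identify the two drifts. Comparing the representation $w=-m\,D_pH(x,D\hat u,m(t))$ with the MFG relation $w=m\alpha^*=-m\,D_pH(x,Du,m(t))$ gives $D_pH(x,Du,m(t))=D_pH(x,D\hat u,m(t))$ on $\{m>0\}$. The strict convexity \eqref{cond:Hcoercive} makes $p\mapsto D_pH(x,p,m)$ injective, so $Du=D\hat u$ wherever $m>0$; since $m$ is a classical, strictly positive solution of the non-degenerate Fokker--Planck equation (strong maximum principle), this extends to all of $(t_0,T)\times\T^d$. Consequently $v:=u-\hat u$ depends on $t$ only, say $v(t,x)=c(t)$, and in particular the right-hand side of the Hamilton--Jacobi equation in \eqref{MFGSocMin} is evaluated at the minimizer's drift $-D_pH(x,D\hat u,m)=\alpha^*$.

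Next I would subtract the Hamilton--Jacobi equation of \eqref{MFGt} from that of \eqref{MFGSocMin}. Since $Du=D\hat u$, the Laplacian and Hamiltonian terms cancel and only the time derivative survives, yielding
\[
\inte \frac{\delta L}{\delta m}\bigl(y,\alpha^*(t,y),x,m(t)\bigr)\,m(t,y)\,dy = c'(t),
\]
whose right-hand side does not depend on $x$. Integrating this identity in $x$ against $m(t,dx)$, using Fubini and the normalization convention \eqref{eq:convention} --- which forces $\inte \frac{\delta L}{\delta m}(y,\alpha^*(t,y),x,m(t))\,m(t,dx)=0$ --- shows $c'(t)\equiv 0$, so the left-hand side vanishes identically, which is the first claimed identity.

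Finally, for the terminal condition I would use that $c$ is now constant and evaluate $v$ at $t=T$ through the terminal data $u(T,\cdot)=G(\cdot,m(T))$ and $\hat u(T,\cdot)=\frac{\delta\widehat{\mathcal G}}{\delta m}(m(T),\cdot)$. Substituting the expression for $\delta\widehat{\mathcal G}/\delta m$ from Lemma \ref{lem:calcul:DF} cancels the $G(x,m(T))$ terms and leaves $\inte\frac{\delta G}{\delta m}(y,m(T),x)\,m(T,dy)$ independent of $x$; integrating against $m(T,dx)$ and invoking \eqref{eq:convention} once more gives the second identity. The step I expect to require the most care is the passage from ``$m$-a.e.'' to ``everywhere'' in the identification $Du=D\hat u$ (including the endpoint $t=T$ by continuity), which rests on the strict positivity and smoothness of $m$ furnished by standard parabolic regularity.
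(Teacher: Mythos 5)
Your proof is correct and follows essentially the same route as the paper's: invoke Lemma \ref{lem.OC} to produce $\hat u$ with $(\hat u,m)$ solving \eqref{MFGSocMin}, identify $Du=D\hat u$ via strict convexity of $H$ in $p$, deduce $u-\hat u=c(t)$, kill $c'(t)$ by integrating the subtracted Hamilton--Jacobi equations against $m(t)$ using Convention \eqref{eq:convention}, and treat the terminal condition via Lemma \ref{lem:calcul:DF}. Your only addition is the explicit strong-maximum-principle argument for the strict positivity of $m$ (which the paper leaves implicit), a welcome clarification; the sign of $c'$ in your display differs from the paper's \eqref{aelrbzrnejd} but is immaterial since it vanishes anyway.
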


\begin{proof} Assume that equality ${\mathcal C}(u,m)= {\mathcal C}^*$ holds. Then the pair $(m, \alpha^*)$ is a minimizer for ${\mathcal C}^*$. By the  characterization of minimizers in Lemma \ref{lem.OC}, there exists $v$ such that the pair $(v,m)$ solves system \eqref{MFGSocMin} with, by \eqref{repw},  
$$
\alpha^*(t,x)=-D_pH(x,Du(t,x),m(t))= -D_pH(x, Dv(t,x),m(t))\qquad \forall (t,x)\in [t_0,T]\times \T^d. 
$$ 
By injectivity of $D_pH$ with respect to the second variable (coming from the strict convexity of $H$ with respect to $p$), we get $Du=Dv$.  This implies that there is a constant $c(t)$ such that $u(t,x)=v(t,x)+c(t)$. By the equations satisfied by $u$ and $v$ we have therefore, for any $(t,x)$, 
\be\label{aelrbzrnejd}
\ds - c'(t)=\int_{\R^d} \frac{\delta L}{\delta m}(y, \alpha^*(t,y), x,  m(t)) m(t,y)dy
.
\ee
We integrate the above equality against $m(t)$: 
$$
\ds -c'(t)= \int_{\T^d}\int_{\R^d} \frac{\delta L}{\delta m}(y, \alpha^*(t,y), x,  m(t)) m(t,y)m(t,x) dydx .
$$
As the double integral vanishes because of Convention \eqref{eq:convention}, we get $c'(t)=0$, 
and therefore, coming back to \eqref{aelrbzrnejd}, 
$$
\int_{\R^d} \frac{\delta L}{\delta m}(y, \alpha^*(t,y), x,  m(t)) m(t,y)dy =0\qquad \forall (t,x)\in [0,T]\times \T^d.
 $$
Equality $u(T,x)=v(T,x)+c(T)$ also implies by Lemma \ref{lem:calcul:DF} that 
\begin{align*}
& G(x,m(T))  = \frac{\delta \widehat{{\mathcal G}}}{\delta m}(m(T), x)+c(T)\\
& \qquad = \int_{\T^d} \frac{\delta G}{\delta m}(y,m(T),x)m(T,y)dy + G(x,m(T))-\inte G(y,m(T))m(T,y)dy+c(T). 
\end{align*}
Integrating with respect to $m(T,x)dx$ and using Convention \eqref{eq:convention}, we obtain: 
$$
0=-\inte G(y,m(T))m(T,y)dy+c(T),
$$
and therefore 
$$
 \int_{\T^d} \frac{\delta G}{\delta m}(y,m(T),x)m(T,y)dy=0\qquad \forall x\in \T^d.
 $$
\end{proof}

\subsection{Characterization of the global efficiency}

Let us recall that we say that the MFG system \eqref{MFGt} is {\it globally efficient} if,  for any initial position $(t_0,m_0)\in [0,T]\times \Pk$, there exists an efficient MFG equilibrium with initial position $(t_0,m_0)$. In order to proceed and characterize global efficiency, we need to work in a special case: we assume that $H$ has the separate form 
\be\label{eq.separate}
H(x,p,m) = H_0(x,p)-F(x,m)\qquad \forall (x,p,m)\in \T^d\times \R^d\times \Pk.
\ee
Then $L$ is also in a separate form: 
$$
L(x,\alpha,m)= L_0(x,\alpha)+F(x,m),
$$
where $L_0(x,\alpha)= \sup_p \alpha\cdot p -H_0(x,p)$ is the convex conjugate of $H_0$ with respect to the last variable.  In this case, the MFG system \eqref{MFGt} becomes: 
\be\label{MFGtSep}
\left\{ \begin{array}{l}
-\partial_t u -\Delta u+H_0(x,Du)-F(x, m(t))=0\qquad {\rm in}\; (t_0,T)\times \T^d\\
\partial_t m -\Delta m-\dive(mD_pH(x,Du))=0\qquad {\rm in}\; (t_0,T)\times \T^d\\
m(t_0,x)=m_0(x), \; u(T,x)= G(x,m(T))\qquad {\rm in}\;  \T^d.
\end{array}\right.
\ee
Note also that $\delta L/\delta m$ reduces to 
$$
\frac{\delta L}{\delta m}(y,\alpha,x, m)= \frac{\delta F}{\delta m}(y, x, m), 
$$
where the right-hand side is independent of $\alpha$. In this case, Proposition \ref{prop.eff} states that, if the MFG equilibrium $(u,m)$ is efficient, then, for any $(t,x)\in [t_0,T]\times \T^d$,   
$$
 \int_{\T^d} \frac{\delta F}{\delta m}(y,x,m(t))m(t,y)dy=0\qquad {\rm and}\qquad \int_{\T^d} \frac{\delta G}{\delta m}(y,x,m(T))m(T,y)dy=0.
 $$

The following statement is a kind of converse. 

\begin{Proposition}\label{prop.z} Assume  that $H$ is of separate form (i.e., \eqref{eq.separate} holds) and that, for any $(x,m)\in \T^d\times \Pk$,  
\be\label{eq:condefficient}
\inte \frac{\delta F}{\delta m}(y, m,x) m(dy)= 0\;{\rm and}\; \inte \frac{\delta G}{\delta m}(y, m,x) m(dy)= 0. 
\ee
Then, the MFG system is globally  efficient: for any initial condition $(t_0,m_0)\in [0,T]\times \Pk$, there exists a solution $(u,m)$ to the MFG system \eqref{MFGtSep} such that 
$$
{\mathcal C}(u,m)= {\mathcal C}^*.
$$
\end{Proposition}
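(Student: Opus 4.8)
The plan is to build the efficient equilibrium directly from a minimizer of the global planner's problem, and to show that under \eqref{eq:condefficient} this minimizer \emph{is} (up to an additive constant) a solution of the MFG system \eqref{MFGtSep}. Fix an arbitrary initial position $(t_0,m_0)$. By Lemma \ref{lem.OC} the functional $J$ admits a minimizer $(\hat m,\hat w)$, and there exists $\hat u$ such that $(\hat u,\hat m)$ is a classical solution of \eqref{MFGSocMin}, with $\hat w=-\hat m\,D_pH(x,D\hat u,\hat m(t))$ by \eqref{repw}; moreover $\mathcal C^*=J(\hat m,\hat w)$. The only discrepancy between \eqref{MFGSocMin} and the genuine MFG system \eqref{MFGtSep} is the source term on the right-hand side of the Hamilton--Jacobi equation together with the terminal datum $\hat u(T,\cdot)=\frac{\delta\widehat{\mathcal G}}{\delta m}(\hat m(T),\cdot)$. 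I will show that \eqref{eq:condefficient} makes both collapse to the MFG ones.

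Second, I would evaluate the two coupling terms along $\hat m$. In the separate case \eqref{eq.separate} one has $\frac{\delta L}{\delta m}(y,\alpha,x,m)=\frac{\delta F}{\delta m}(y,x,m)$, which is independent of $\alpha$; hence the right-hand side of the HJ equation in \eqref{MFGSocMin} equals $\inte \frac{\delta F}{\delta m}(y,x,\hat m(t))\hat m(t,y)\,dy$, which vanishes identically by the first identity in \eqref{eq:condefficient}. For the terminal datum, Lemma \ref{lem:calcul:DF} gives $\frac{\delta\widehat{\mathcal G}}{\delta m}(\hat m(T),x)=\inte\frac{\delta G}{\delta m}(y,\hat m(T),x)\hat m(T,dy)+G(x,\hat m(T))-\inte G(y,\hat m(T))\hat m(T,dy)$; the integral term vanishes by the second identity in \eqref{eq:condefficient}, leaving $\hat u(T,x)=G(x,\hat m(T))-c_T$ with $c_T:=\inte G(y,\hat m(T))\hat m(T,dy)$ a genuine constant, since it depends only on the fixed measure $\hat m(T)$ and not on $t$ or $x$.

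Third, I set $u:=\hat u+c_T$. Since $c_T$ is constant, $Du=D\hat u$, $\partial_t u=\partial_t\hat u$ and $\Delta u=\Delta \hat u$, so $(u,\hat m)$ still solves the first two equations of \eqref{MFGSocMin}; the source term being now zero, these are precisely the HJ and Fokker--Planck equations of \eqref{MFGtSep}, and $u(T,x)=G(x,\hat m(T))$, so $(u,\hat m)$ is a bona fide solution of the MFG system \eqref{MFGtSep}. Finally I would check efficiency: the MFG feedback is $\alpha^*=-D_pH(x,Du)=-D_pH(x,D\hat u)=\hat w/\hat m$, so the MFG trajectory $\hat m$ together with the control $\alpha^*$ coincides with the planner's minimizer $(\hat m,\hat w)$. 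Using $L=L_0(x,\alpha)+F(x,m)$ in the separate case and $\widehat{\mathcal G}(\hat m(T))=\inte G(x,\hat m(T))\hat m(T,dx)$, the integrand defining $\mathcal C(u,\hat m)$ coincides term by term with that of $J(\hat m,\hat w)$; hence $\mathcal C(u,\hat m)=J(\hat m,\hat w)=\mathcal C^*$, which is the claim.

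The argument is essentially a corollary of Lemmas \ref{lem.OC} and \ref{lem:calcul:DF}, so I do not expect a serious analytic obstacle. The one point deserving care is the bookkeeping of the additive constant $c_T$: verifying that it is independent of both $t$ and $x$, so that the shift $u=\hat u+c_T$ leaves the HJ and Fokker--Planck equations untouched while exactly absorbing the mismatch between the terminal condition $\frac{\delta\widehat{\mathcal G}}{\delta m}(\hat m(T),\cdot)$ of the optimality system and the MFG datum $G(\cdot,\hat m(T))$.
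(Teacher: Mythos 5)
Your proposal is correct and follows essentially the same route as the paper: take the planner's minimizer $(\hat u,\hat m)$ from Lemma \ref{lem.OC}, use \eqref{eq:condefficient} (via the separate structure, so $\delta L/\delta m=\delta F/\delta m$) and Lemma \ref{lem:calcul:DF} to annihilate the source term and reduce the terminal datum, shift by the constant $c_T=\inte G(y,\hat m(T))\hat m(T,dy)$ to obtain a genuine solution $(u,\hat m)$ of \eqref{MFGtSep}, and conclude ${\mathcal C}(u,\hat m)={\mathcal C}(\hat u,\hat m)={\mathcal C}^*$. The only ingredient the paper adds is the preliminary reduction, by approximation, to an initial measure $m_0$ with smooth positive density --- a minor technical step worth recording, but your argument is otherwise the paper's proof.
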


\begin{proof} Without loss of generality, we can  assume that $m_0$ has a smooth and positive density. Otherwise we can proceed by approximation. Let $(\hat m, \hat \alpha)$ be the minimum of \eqref{SocMin} and $\hat u$ be such that $(\hat u, \hat m)$ solves \eqref{MFGSocMin} (recall Lemma \ref{lem.OC}).  By assumption \eqref{eq:condefficient} and the structure condition \eqref{eq.separate}, $(\hat u, \hat m)$ solves 
$$
\left\{ \begin{array}{l}
\ds -\partial_t \hat u -\Delta \hat u+H_0(x,D\hat u)- F(x,\hat m(t)) =0\qquad {\rm in}\; (t_0,T)\times \R^d\\
\ds \partial_t \hat m -\Delta \hat m- \dive(\hat mD_pH(x,D\hat u(t,x)))=0 \qquad {\rm in}\; (t_0,T)\times \R^d\\
\ds \hat m(t_0,x)=m_0(x), \; \hat u(T,x)= \frac{\delta \widehat{{\mathcal G}}}{\delta m}(\hat m(T),x)\qquad {\rm in}\; \R^d.
\end{array}\right.
$$
As, by  Lemma \ref{lem:calcul:DF}, 
$$
\frac{\delta \widehat{{\mathcal G}}}{\delta m}(m,x)= G(x,m)  -\inte G(y,m)m(dy)
$$
we see that $u(t,x)= \hat u(t,x)+\inte G(y,m(T))m(dy)$ solves the MFG system \eqref{MFGtSep}. 
Moreover, by the definition of $u$, $\hat u$ and $\hat m$, 
 $$
 {\mathcal C}(u,\hat m) = {\mathcal C}(\hat u, \hat m) = {\mathcal C}^*. 
 $$
\end{proof}
 
Let us now point out an equivalent form of \eqref{eq:condefficient}: 

\begin{Proposition}\label{prop:condefficientEQ} The map $F$ satisfies \eqref{eq:condefficient} if and only if there exists a $C^2$ function ${\mathcal F}:\Pk\to \R$ such that
\be\label{eq:condefficientEQ}
F(x,m)= {\mathcal F}(m)+ \frac{\delta {\mathcal F}}{\delta m} (m,x)\qquad \forall (x,m)\in \T^d\times \Pk.
\ee
In this case, one can take ${\mathcal F}= \hat {\mathcal F}$, where $\hat {\mathcal F}$ is given by
$$
\hat {\mathcal F}(m):=\inte F(x,m)m(dx)\qquad \forall m\in \Pk.
$$ 
\end{Proposition}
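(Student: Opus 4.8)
The plan is to reduce both implications of the equivalence to a single computation, namely that of $\delta\hat{\mathcal F}/\delta m$ for the functional
$$
\hat{\mathcal F}(m):=\inte F(x,m)\,m(dx),
$$
which is exactly the functional appearing in the last assertion of the statement. Since we are in the separate case, the standing assumptions on $L$ force $F$ to be $C^2$ with respect to $m$, so $\hat{\mathcal F}$ is $C^2$ on $\Pk$ and the computation carried out in the proof of Lemma~\ref{lem:calcul:DF} (with $G$ replaced by $F$) applies verbatim and gives
\begin{equation}
\frac{\delta \hat{\mathcal F}}{\delta m}(m,y)=\inte \frac{\delta F}{\delta m}(x,m,y)\,m(dx)+F(y,m)-\inte F(x,m)\,m(dx).\tag{$\star$}
\end{equation}
I would record $(\star)$ first, taking care that $\frac{\delta\hat{\mathcal F}}{\delta m}$ is the representative normalized by \eqref{eq:convention}.

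For the direct implication I would assume \eqref{eq:condefficient}. After relabelling the integration variable, \eqref{eq:condefficient} says precisely that the first term on the right-hand side of $(\star)$ vanishes for all $(y,m)$, whence $\frac{\delta\hat{\mathcal F}}{\delta m}(m,y)=F(y,m)-\hat{\mathcal F}(m)$. Rearranging yields $F(y,m)=\hat{\mathcal F}(m)+\frac{\delta\hat{\mathcal F}}{\delta m}(m,y)$, i.e.\ \eqref{eq:condefficientEQ} with $\mathcal F=\hat{\mathcal F}$; this simultaneously establishes the final claim of the proposition. For the converse I would assume \eqref{eq:condefficientEQ} for some $C^2$ map $\mathcal F$. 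Integrating \eqref{eq:condefficientEQ} against $m(dx)$ and using the normalization \eqref{eq:convention} of $\frac{\delta\mathcal F}{\delta m}$ gives $\hat{\mathcal F}=\mathcal F$, so that \eqref{eq:condefficientEQ} reads $F(y,m)-\hat{\mathcal F}(m)=\frac{\delta\hat{\mathcal F}}{\delta m}(m,y)$; substituting this back into $(\star)$ makes the two occurrences of $\frac{\delta\hat{\mathcal F}}{\delta m}(m,y)$ cancel and leaves $\inte \frac{\delta F}{\delta m}(x,m,y)\,m(dx)=0$, which is \eqref{eq:condefficient}.

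The only point that really requires care is the bookkeeping of the measure--derivative conventions: one must keep the integration of the \emph{base point} (first argument) of $\frac{\delta F}{\delta m}$ against $m$, which is what \eqref{eq:condefficient} controls, distinct from the normalization \eqref{eq:convention}, which constrains the \emph{perturbation} variable (last argument), and one must make sure the additive constant in $(\star)$ is tracked correctly. Once $(\star)$ is in hand both implications are purely algebraic, so I do not expect a substantive obstacle beyond checking the $C^2$ regularity of $\hat{\mathcal F}$ needed to make sense of $\frac{\delta\hat{\mathcal F}}{\delta m}$.
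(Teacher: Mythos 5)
Your proof is correct, and your forward implication is exactly the paper's: apply the computation of Lemma \ref{lem:calcul:DF} with $F$ in place of $G$ to get your identity $(\star)$, note that \eqref{eq:condefficient} kills its first term, and read off \eqref{eq:condefficientEQ} with ${\mathcal F}=\hat{\mathcal F}$. Your converse, however, takes a genuinely different route. The paper never identifies ${\mathcal F}$ with $\hat{\mathcal F}$: it differentiates \eqref{eq:condefficientEQ} to get $\frac{\delta F}{\delta m}(x,m,y)=\frac{\delta {\mathcal F}}{\delta m}(m,y)+\frac{\delta^2 {\mathcal F}}{\delta m^2}(m,x,y)$, invokes the symmetry identity $\frac{\delta^2 {\mathcal F}}{\delta m^2}(m,x,y)=\frac{\delta^2 {\mathcal F}}{\delta m^2}(m,y,x)-\frac{\delta {\mathcal F}}{\delta m}(m,y)+\frac{\delta {\mathcal F}}{\delta m}(m,x)$ from \cite[Lemma 2.2.4]{cardaliaguet2015master}, and then annihilates both resulting terms of $\int_{\T^d}\frac{\delta F}{\delta m}(x,m,y)\,m(dx)$ by Convention \eqref{eq:convention}. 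You instead integrate \eqref{eq:condefficientEQ} against $m(dx)$ to pin down ${\mathcal F}=\hat{\mathcal F}$ (using only the normalization of $\frac{\delta{\mathcal F}}{\delta m}$) and then substitute back into $(\star)$, so that both implications run through the single first-order computation of Lemma \ref{lem:calcul:DF}. Your variant buys three things: it avoids second derivatives in $m$ entirely, so the converse needs only ${\mathcal F}\in C^1$ where the paper's argument genuinely uses $C^2$ through the symmetry lemma; it is self-contained, with no appeal to the external symmetry result of \cite{cardaliaguet2015master}; and it yields as a by-product that the representing map in \eqref{eq:condefficientEQ} is unique and necessarily equals $\hat{\mathcal F}$, a fact the proposition states only as ``one can take''. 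What the paper's route buys is a direct verification of \eqref{eq:condefficient} for an arbitrary given ${\mathcal F}$, without the intermediate identification step. The one place such a proof could silently fail---confusing the base argument of $\frac{\delta F}{\delta m}$, which \eqref{eq:condefficient} integrates against $m$, with the perturbation argument, which \eqref{eq:convention} normalizes---is exactly the bookkeeping you flag and handle correctly, so there is no gap.
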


\begin{proof} If \eqref{eq:condefficient} holds, then it is obvious by  \eqref{calcul:DF} that $F$ is of the form \eqref{eq:condefficientEQ} with ${\mathcal F}= \hat {\mathcal F}$. Conversely, if $F$ is of the form \eqref{eq:condefficientEQ}, then 
\begin{align*}
\frac{\delta F}{\delta m}(x,m,y) & = \frac{\delta {\mathcal F}}{\delta m}(m,y)+ \frac{\delta^2 {\mathcal F}}{\delta m^2} (m,x,y) 
= \frac{\delta^2 {\mathcal F}}{\delta m^2} (m,y,x)+ \frac{\delta {\mathcal F}}{\delta m}(m,x), 
\end{align*}
because, according to \cite[Lemma 2.2.4]{cardaliaguet2015master},  
$$
\frac{\delta^2 {\mathcal F}}{\delta m^2} (m,x,y)= \frac{\delta^2 {\mathcal F}}{\delta m^2} (m,y,x)-\frac{\delta {\mathcal F}}{\delta m}(m,y)+ \frac{\delta {\mathcal F}}{\delta m}(m,x).
$$
We can then conclude that 
$$
\inte \frac{\delta F}{\delta m}(x,m,y)m(dx)= \inte \frac{\delta^2 {\mathcal F}}{\delta m^2} (m,y,x)m(dx)+ \inte \frac{\delta {\mathcal F}}{\delta m}(m,x)m(dx) =0, 
$$ 
by Convention \eqref{eq:convention}. 
\end{proof}

To summarize, we have obtained the following characterization of global efficiency: 
\begin{Theorem}\label{thm.eff}  Assume  that $H$ is of separate form (i.e., \eqref{eq.separate} holds). Then the MFG system is globally efficient if and only if 
$$
\inte \frac{\delta F}{\delta m}(y,m,x)m(dx)=0, \; \inte \frac{\delta G}{\delta m}(y,m,x)m(dx)=0, \qquad \forall (x,m)\in \T^d\times \Pk, 
$$
which is also equivalent to the existence of $C^2$ maps ${\mathcal F}:\Pk\to \R$ and ${\mathcal G}:\Pk\to \R$ such that 
$$
F(x,m)= {\mathcal F}(m)+ \frac{\delta {\mathcal F}}{\delta m} (m,x),\; G(x,m)= {\mathcal G}(m)+ \frac{\delta {\mathcal G}}{\delta m} (m,x)
\qquad \forall (x,m)\in \T^d\times \Pk.
$$
\end{Theorem}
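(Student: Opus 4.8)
\emph{The plan} is to read the theorem as the chain of equivalences between three statements: (i) the MFG system is globally efficient; (ii) the two zero-mean conditions of \eqref{eq:condefficient} hold for $F$ and for $G$ at every point of $\T^d\times\Pk$; and (iii) the representations $F={\mathcal F}+\delta{\mathcal F}/\delta m$ and $G={\mathcal G}+\delta{\mathcal G}/\delta m$ hold. Almost all the analytic content is already contained in the three preceding propositions, so the proof is mostly a matter of wiring them together. The equivalence (ii)$\Leftrightarrow$(iii) is exactly Proposition~\ref{prop:condefficientEQ}: although stated for $F$, its proof uses only the symmetry of $\delta^2/\delta m^2$ from \cite[Lemma~2.2.4]{cardaliaguet2015master} and Convention~\eqref{eq:convention}, and so transfers verbatim to $G$, with ${\mathcal G}=\hat{\mathcal G}$ where $\hat{\mathcal G}(m)=\inte G(x,m)m(dx)$. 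Likewise, (ii)$\Rightarrow$(i) is precisely Proposition~\ref{prop.z}: under the separate structure \eqref{eq.separate} together with \eqref{eq:condefficient}, the global-planner optimizer, after an additive time shift, solves the MFG system \eqref{MFGtSep} with the same cost, producing an efficient equilibrium for \emph{every} $(t_0,m_0)$.

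The only implication in which the proof genuinely does something is (i)$\Rightarrow$(ii), and the idea is to realize an arbitrary $m_*\in\Pk$ as a point on the flow of a conveniently chosen efficient equilibrium and then apply Proposition~\ref{prop.eff}. For the $F$-condition this is immediate: choose any $t_0\in[0,T)$ and $m_0=m_*$; global efficiency provides an efficient $(u,m)$ with $m(t_0)=m_*$; and Proposition~\ref{prop.eff} evaluated at $t=t_0$ (where, in the separate case, $\delta L/\delta m=\delta F/\delta m$) gives $\inte\frac{\delta F}{\delta m}(y,m_*,x)m_*(dy)=0$ for all $x$, hence for all $m_*$. The $G$-condition is the subtle point, because a generic $m_*$ arises naturally as an \emph{initial} datum but not obviously as the \emph{terminal} value $m(T)$ of a controlled flow. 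I would handle it by taking $t_0=T$ and $m_0=m_*$: the time interval collapses, the admissible pair is trivially efficient (both ${\mathcal C}(u,m)$ and ${\mathcal C}^*$ reduce to $\inte G(x,m_*)m_*(dx)$), and $m(T)=m_*$; then the terminal part of the proof of Proposition~\ref{prop.eff}---the identity $G(x,m(T))=\frac{\delta\hat{\mathcal G}}{\delta m}(m(T),x)+c(T)$ together with Lemma~\ref{lem:calcul:DF} and Convention~\eqref{eq:convention}---yields $\inte\frac{\delta G}{\delta m}(y,m_*,x)m_*(dy)=0$ for every $x$. If one wishes to avoid the degenerate interval, the same conclusion follows by letting $t_0\nearrow T$ and passing to the limit, using $m(T)\to m_*$ and the continuity of $\delta G/\delta m$.

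The main obstacle is therefore not any estimate---every analytic ingredient has been absorbed into Propositions~\ref{prop.eff}, \ref{prop.z} and \ref{prop:condefficientEQ}---but the quantifier upgrade inside (i)$\Rightarrow$(ii): passing from ``the conditions hold along each equilibrium flow'' to ``the conditions hold at every $(x,m)\in\T^d\times\Pk$''. The freedom in choosing the initial datum disposes of the $F$-condition, while the $G$-condition forces one to exhibit every measure as a terminal value $m(T)$, which is exactly what the choice $t_0=T$ (or the limit $t_0\nearrow T$) achieves.
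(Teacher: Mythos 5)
Your overall wiring is the same as the paper's: sufficiency is Proposition \ref{prop.z} combined with Proposition \ref{prop:condefficientEQ} (whose proof, as you say, transfers verbatim to $G$ with ${\mathcal G}=\hat{\mathcal G}$), and necessity is obtained by feeding an arbitrary measure into Proposition \ref{prop.eff} as an initial datum for the $F$-condition, and by sending $t_0\nearrow T$ so that $m(T)\to m_0$ for the $G$-condition. Your ``fallback'' limit argument is in fact \emph{exactly} the paper's proof of the $G$-condition; the paper additionally first restricts to $m_0$ with a smooth positive density (so that Proposition \ref{prop.eff} applies to a classical equilibrium and $m(T)\to m_0$ makes sense) and then concludes for general $m\in\Pk$ by approximation, using the Lipschitz continuity of $\delta F/\delta m$ and $\delta G/\delta m$ --- a step you should not gloss over when you write ``hence for all $m_*$''.

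Your \emph{primary} route for the $G$-condition, taking the degenerate interval $t_0=T$, does not work, and it is worth seeing why. On a degenerate interval, efficiency is vacuous: ${\mathcal C}(u,m)={\mathcal C}^*=\inte G(x,m_*)m_*(dx)$ holds for \emph{every} $m_*$ and \emph{every} coupling $G$, so no information about $G$ can possibly be extracted. Concretely, the identity $G(x,m(T))=\frac{\delta \widehat{{\mathcal G}}}{\delta m}(m(T),x)+c(T)$ that you invoke is not a hypothesis of Proposition \ref{prop.eff}; in its proof it is \emph{derived} from $u=v+c(t)$, which in turn comes from the coincidence of the optimal feedbacks $-D_pH(x,Du,m)=-D_pH(x,Dv,m)$ on $(t_0,T)$ via Lemma \ref{lem.OC} and the injectivity of $D_pH$ in $p$. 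When $t_0=T$ that interval is empty, so the chain of deductions never starts and the terminal identity is simply unavailable. (If it were, condition \eqref{eq:condefficient} for $G$ would hold for every coupling whatsoever, contradicting e.g.\ the computation in the potential case, where the averaged derivative equals $-G$.) So you must delete the $t_0=T$ shortcut and promote the limiting argument $t_0\nearrow T$ --- with the smooth-density reduction and final approximation --- to the actual proof, which then coincides with the paper's.
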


Surprisingly, this condition depends only on the coupling terms $F$ and $G$, but not on the Hamiltonian $H_0$. 

\begin{proof} We have seen in Propositions \ref{prop.z} and \ref{prop:condefficientEQ} that the existence of ${\mathcal F}$ for which \eqref{eq:condefficientEQ} holds  is sufficient for the global efficiency. Conversely, if the MFG system is globally efficient, then, for any initial condition $(t_0,m_0)\in [0,T]\times \Pk$, where $m_0$ has a smooth density, there exists a MFG equilibrium $(u,m)$ such that  
$$
 \int_{\T^d} \frac{\delta F}{\delta m}(y,x,m(t))m(t,y)dy=0\qquad {\rm and}\qquad \int_{\T^d} \frac{\delta G}{\delta m}(y,x,m(T))m(T,y)dy=0.
 $$
In particular, for $t=t_0$, we obtain
$$
 \int_{\T^d} \frac{\delta F}{\delta m}(y,x,m_0)m_0(y)dy=0.
 $$
 Choosing $t_0$ arbitrarily close to $T$, $m(T)$ becomes closer and closer to $m_0$ (because $m_0$ has a smooth density), we obtain: 
 $$
 \int_{\T^d} \frac{\delta G}{\delta m}(y,x,m_0)m_0(y)dy=0.
 $$
 We conclude by approximation and using Proposition \ref{prop:condefficientEQ} again. 
\end{proof}

\section{Lower bound on ${\mathcal C}-{\mathcal C}^*$}\label{sec.4}

\begin{Theorem}\label{thm:LB} Under our standing assumptions, let  $(u,m)$ be a solution to the MFG system \eqref{MFGt} starting from $(t_0,m_0)\in [0,T]\times \Pk$. Then we have the lower bound: for any $\ep>0$, 
 \begin{align}
{\mathcal C}( u, m)-{\mathcal C}^*\geq & \; \;
 C_\ep^{-1}\Bigl(\int_{t_0+\ep}^{T-\ep} \inte \left[ \inte  \frac{\delta L}{\delta m}(x, \alpha^*(t,x),y,m(t)) m(t,x)dx\right]^2  dydt \Bigr)^2 \label{LBt}\\
&\qquad \qquad\qquad\qquad  +   C^{-1}\Bigl(\inte \left[ \inte  \frac{\delta G}{\delta m}(x, m(T),y) m(T,x)dx\right]^2  dy \Bigr)^4,
\notag
\end{align}
where $\alpha^*(t,x)=-D_pH(x,Du(t,x),m(t))$ and where the constants $C\geq 1$  depends on the regularity of $H$,  $G$ and on $m_0$ and where  $C_\ep\geq 1$ depends also on $\ep$. 
\end{Theorem}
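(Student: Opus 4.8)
The starting point is that the equilibrium furnishes an admissible competitor for the planner's problem: setting $w^*:=-m\,D_pH(\cdot,Du,m)$ as in \eqref{repw}, the pair $(m,w^*)$ belongs to ${\mathcal E}_2(t_0)$, with ${\mathcal C}(u,m)=J(m,w^*)$ (the running costs agree since $w^*/m=\alpha^*$, and the terminal costs agree by \eqref{eq.defmathcalFG}), while ${\mathcal C}^*=\min_{{\mathcal E}(t_0)}J$ by Lemma \ref{lem.OC}. Hence for \emph{any} admissible variation $(\mu,\nu)$ — i.e. $\partial_t\mu-\Delta\mu+\dive\nu=0$ on $(t_0,T)\times\T^d$ with $\mu(t_0)=0$ (so $\inte\mu(t,\cdot)=0$ for all $t$) — and any $\eta>0$ small enough that $m+\eta\mu\ge0$, one has ${\mathcal C}(u,m)-{\mathcal C}^*\ge J(m,w^*)-J(m+\eta\mu,w^*+\eta\nu)$. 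The plan is to compute the first variation $DJ(m,w^*)[\mu,\nu]$, then optimise the amplitude $\eta$ of two well-chosen variations.

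The crux is the identity, with $R(t,y):=\inte\frac{\delta L}{\delta m}(x,\alpha^*(t,x),y,m(t))\,m(t,x)\,dx$ and $R_T(y):=\inte\frac{\delta G}{\delta m}(x,m(T),y)\,m(T,x)\,dx$,
\[
DJ(m,w^*)[\mu,\nu]=\int_{t_0}^{T}\inte R(t,y)\,\mu(t,y)\,dy\,dt+\inte R_T(y)\,\mu(T,y)\,dy ,
\]
so that $R$ and $R_T$ are exactly the two efficiency defects in \eqref{LBt}. I would establish it as follows. Differentiating $J$ and using the convex-duality relation $D_\alpha L(x,\alpha^*,m)=-Du$ (valid since $\alpha^*=-D_pH(\cdot,Du,m)$), the kinetic part contributes $-Du\cdot(\nu-\alpha^*\mu)$ together with $L(x,\alpha^*,m)\mu$ and $(\inte\frac{\delta L}{\delta m}(x,\alpha^*,y,m)\mu(t,y)dy)\,m$, while $\hat{\mathcal G}$ produces the pairing of $\frac{\delta\hat{\mathcal G}}{\delta m}(m(T),\cdot)$ with $\mu(T)$. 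Integrating $-Du\cdot\nu$ by parts in space, replacing $\dive\nu$ by $\Delta\mu-\partial_t\mu$ from the linearised constraint, integrating by parts in time (the boundary term at $t_0$ vanishes because $\mu(t_0)=0$), and using the Hamilton--Jacobi equation $\partial_tu+\Delta u=H(x,Du,m)$ turns the zeroth-order contributions into $\int_{t_0}^{T}\inte\big(H(x,Du,m)+Du\cdot\alpha^*+L(x,\alpha^*,m)\big)\mu$, which vanishes by the Fenchel equality at the optimiser $\alpha^*$. What survives is the $\frac{\delta L}{\delta m}$ term, equal to $\langle R,\mu\rangle$ after Fubini, and the terminal term $-u(T,\cdot)+\frac{\delta\hat{\mathcal G}}{\delta m}(m(T),\cdot)$ paired with $\mu(T)$; inserting $u(T,\cdot)=G(\cdot,m(T))$ and the formula of Lemma \ref{lem:calcul:DF}, this reduces (modulo an additive constant, killed by $\inte\mu(T)=0$) to $\langle R_T,\mu(T)\rangle$. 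This identity is the conceptual heart of the theorem.

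With the identity in hand, a second-order Taylor expansion gives ${\mathcal C}(u,m)-{\mathcal C}^*\ge \eta\,|DJ[\mu,\nu]|-\tfrac{\eta^2}{2}\,\mathcal{Q}(\mu,\nu)$, where $\mathcal{Q}$ is an upper bound for the second variation of $J$ along the segment. Using the uniform convexity \eqref{hyp.CvL}–\eqref{cond:Hcoercive}, the growth bounds \eqref{hyp.growth1}–\eqref{hyp.growth2}, and standard parabolic a priori estimates for \eqref{MFGt} (boundedness of $\alpha^*$, a positive lower bound on $m$, and Lipschitz/Sobolev bounds on $R$ and $R_T$, all depending only on the data and $m_0$), one controls $\mathcal{Q}(\mu,\nu)\le C(\|\nu-\alpha^*\mu\|_{L^2}^2+\|\mu\|_{L^2}^2)$. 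Optimising over $\eta$ (capped so that $m+\eta\mu\ge0$) yields ${\mathcal C}(u,m)-{\mathcal C}^*\ge c\,|DJ[\mu,\nu]|^2/\mathcal{Q}(\mu,\nu)$; since the gap dominates each single competitor's value, it dominates the average of the two competitors below, producing the sum in \eqref{LBt}.

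It remains to choose the two variations, which is where the two terms and the rôle of $\ep$ originate. For the interior term I would take $\mu\simeq-R$ on $[t_0+\ep,T-\ep]$, smoothly cut off to $0$ on $\ep$-collars and at $t=T$, and recover $\nu$ as a gradient solving $\dive\nu=\Delta\mu-\partial_t\mu$; then $|DJ|\simeq\|R\|_{L^2([t_0+\ep,T-\ep]\times\T^d)}^2$ while $\mathcal{Q}\le C_\ep$ (the $\ep^{-1}$ blow-up coming from the time-derivative of the cut-off on the collars), giving the first, $C_\ep^{-1}$-term. For the terminal term I would steer the variation to the prescribed profile $\mu(T)\simeq-R_T$, keeping $\mu$ supported in a thin slab $[T-h,T]$ so that the interior pairing $\langle R,\mu\rangle$ cannot spoil the sign of $DJ$. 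The main obstacle is exactly this terminal steering: one must create a prescribed $O(1)$ terminal profile out of $\mu(t_0)=0$ against the smoothing of the heat operator and under the positivity constraint $m+\eta\mu\ge0$, and the competition between the slab-width $h$ (needed small to kill the cross term) and the $h^{-1}$ growth of the steering cost forces a genuinely degenerate balance; optimising $h$ and $\eta$ produces the second, higher-power term of \eqref{LBt}, with a constant $C$ independent of $\ep$ since the whole horizon is available for the steering.
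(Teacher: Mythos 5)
Your proposal is correct and follows essentially the same route as the paper: the equilibrium pair is viewed as an admissible competitor, one computes the first variation of the planner's cost along linearized perturbations $(\mu,\nu)$ and obtains exactly the paper's identity $\phi'(0)=\langle R,\mu\rangle+\langle R_T,\mu(T)\rangle$ via the Hamilton--Jacobi equation and the Fenchel relation $D_\alpha L(x,\alpha^*,m)=-Du$, then one Taylor-expands to second order and optimizes the amplitude, with the same two test variations (an interior cutoff producing the $C_\ep^{-1}$-term, and a terminal ramp on a slab of width tuned to $\kappa$, whose $\ep^{-2}$ second-variation cost yields the degenerate $\kappa^4$ exponent). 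One small slip: your interior choice $\mu\simeq -R$ is not mass-neutral, since Convention \eqref{eq:convention} gives $\inte R(t,y)\,m(t,y)\,dy=0$ but not $\inte R(t,y)\,dy=0$, and mass-neutrality is required both by your own admissibility condition and to solve $\dive\,\nu=\Delta\mu-\partial_t\mu$; the fix is the paper's $m$-weighted choice $\mu=-\gamma(t)\,m(t,y)R(t,y)$, which is harmless for the final bound because $m$ is bounded above and below on the support of the cutoff.
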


\begin{Remark}{\rm
The presence of $\ep$ is related to the constraints $m(t_0)=m_0$ and $u(T)=G(x,m(T))$: they prevent the choice of arbitrary test functions in the proof. For instance, if $G\equiv 0$, one can replace $T-\ep$ by $T$ in the integral. 
}\end{Remark}

\begin{proof}[Proof of Theorem \ref{thm:UB}] As the equation for $m$ is uniformly parabolic, for $\ep>0$ there exists a constant $C_\ep$ such that  $m$ has  a $C^2$  density which is bounded below by $C_\ep^{-1}$ on $[t_0+\ep/2,T]$.  With a given  $(\mu,\beta)$ smooth solution to 
\be\label{eq.linbetamu}
\partial_t \mu-\Delta \mu+\dive(\beta)=0\qquad {\rm in }\; (t_0,T)\times \T^d,\qquad \mu(t_0,x)=0\; {\rm in} \; \T^d, 
\ee
with $\beta=\mu=0$ on $[t_0,t_0+\ep/2]$, we set $\tau_\ep:= 1/(2C_\ep\|\mu\|_\infty)$ and, for $h\in [0,\tau_\ep]$, $(m_h,\alpha_h):=( m+h \mu, ( m\alpha^*+h\beta)/( m+h \mu))$ where $\alpha^*(t,x):=-D_pH(x,Du(t,x),m(t))$. Note that  the pair $(m_h,\alpha_h)$ satisfies $m_h(t)\in \Pk$ for any $t$ and the constraint \eqref{eq:ConstrT} for any $h\in [0,\tau_\ep]$ (in particular $m_h(t_0)=m_0$ because $\mu(t_0)=0$). Moreover, $h\to (m_h,\alpha_h)$ is smooth because $\mu=0$ on $[t_0,t_0+\ep/2]$ and $m$ is bounded below by a positive constant on $[\ep/2,T]$. 

Next we define the map $\phi:[0,\tau_\ep]\to \R$ by 
\begin{align*}
\phi(h)& :=\int_{t_0}^T \inte L\left(x, \alpha_h(t,x),  m_h(t)\right)  m_h(t,x)dxdt
+ \inte G(x,m_h(T))m_h(T,x)dx.
\end{align*}
 We have 
$$
\begin{array}{rl}
\ds \phi'(h) \; =  & \ds  \int_{t_0}^T \inte L\left(x, \alpha_h(t,x),m_h(t)\right)\mu(t,x) dxdt\\
& \ds + \int_{t_0}^T \inte D_\alpha L(x,  \alpha_h(t,x),m_h(t))\cdot \left(\beta(t,x)- \alpha_h(t,x)\mu(t,x)\right) dxdt\\
& \ds +\int_{t_0}^T  \inte \inte \frac{\delta L}{\delta m}(x, \alpha_h(t,x),y,m_h(t)) \mu(t,y) m_h(t,x)dydxdt\\
& \ds + \inte \left\{ G(x, m_h(T))\mu(T,x)+ \inte \frac{\delta G}{\delta m}(x, m_h(T),y) \mu(T,y) m_h(T,x)dy\right\}dx.
\end{array}
$$
Recalling the definition of $ \alpha^*$ and the fact that 
$$
\begin{array}{rl}
\ds L\left(x, \alpha^*(t,x),m(t)\right)\; =  & \ds -H(x, D u(t,x),m(t))+ D_pH(x,D u(t,x),m(t))\cdot D u(t,x)\\ 
= & \ds -\partial_t u(t,x)-\Delta  u(t,x)-  \alpha^*(t,x)\cdot D u(t,x), 
\end{array}
$$
and that 
$$
D_\alpha L(x,  \alpha^*(t,x),m(t)) = -D u(t,x), 
$$
we obtain 
\be\label{phiprim0}
\begin{array}{rl}
\ds \phi'(0) \; =  & \ds \int_{t_0}^T \inte  (-\partial_t u(t,x)-  \Delta u(t,x) - \alpha^*(t,x)\cdot D u(t,x))  \mu (t,x) dxdt\\
& \ds + \int_{t_0}^T \inte 
-D u(t,x)\cdot \left( \beta(t,x)- \alpha^*(t,x)\mu(t,x)\right) dxdt\\
& \ds +\int_{t_0}^T  \inte \inte \frac{\delta L}{\delta m}(x, \alpha^*(t,x),m(t),y) \mu(t,y) m(t,x)dydxdt\\
& \ds + \inte \left\{ G(x, m(T))\mu(T,x)+ \inte \frac{\delta G}{\delta m}(x, m(T),y) \mu(T,y) m(T,x)dy\right\}dx\\
 = & \ds \int_{t_0}^T  \inte \inte \frac{\delta L}{\delta m}(x, \alpha^*(t,x),m(t),y) \mu(t,y) m(t,x)dydxdt\\
& \ds + \inte \inte \frac{\delta G}{\delta m}(x, m(T),y) \mu(T,y) m(T,x)dydx,
\end{array}
\ee
where we used the equation satisfied by the pair $(\mu,\beta)$ and the fact that $u(T,\cdot)=G(\cdot, m(T))$  for the last equality. Let us also note for later use that 
\be\label{eq.phisecon}
\begin{array}{l}
\ds \phi''(h) 
\ds 
= \int_{t_0}^T\inte  D^2_{\alpha}L\left(x, \alpha_h,m_h\right) (\beta-\mu \alpha_h) \cdot (\frac{\beta}{m_h}-\frac{\mu \alpha_h}{m_h}) dxdt
\\
\qquad \qquad  \ds 
 + 2\int_{t_0}^T\inte\inte D_\alpha \frac{\delta L}{\delta m}(x, \alpha_h,m_h,y)\cdot (\beta-\alpha_h\mu)(t,x)  \mu(t,y)dxdydt
 \\
\qquad \qquad  \ds + \int_{t_0}^T\inte \inte \frac{\delta^2   L}{\delta m^2} (x,\alpha_h,  m_h,y,z)\mu(y,t)\mu(z,t)m_h(t,x)dxdydt\\
\qquad \qquad  \ds  
+2\int_{t_0}^T  \inte \inte \frac{\delta L}{\delta m}(x, \alpha_h(t,x),m_h(t),y) \mu(t,y) \mu(t,x)dydxdt\\
\qquad \qquad  \ds +
2 \inte \inte  \frac{\delta G}{\delta m^2} (x,m_h(T),y)\mu(x,T)\mu(y,T)dxdy\\
\qquad \qquad  \ds +
\inte \inte \frac{\delta^2 G}{\delta m^2} (x,m_h(T),y,z)\mu(x,T)\mu(y,T)m_h(t,x)dxdy.
\end{array}
\ee
Recall that,  for any $h\in [0,\tau_\ep]$, the pair $(m_h,\alpha_h)$ satisfies $m_h(t)\in \Pk$ for all $t\in [t_0,T]$ and the constraint \eqref{eq:ConstrT}. Therefore
$$
\phi(h)\geq {\mathcal C}^*\qquad \forall h\in [0,\tau_\ep].
$$
As 
$$
\phi(h)\leq \phi(0)+ h \phi'(0)+ \frac{h^2}{2}\|\phi''\|_\infty
= {\mathcal C}(u,m)+ h \phi'(0)+ \frac{h^2}{2}\|\phi''\|_\infty, 
$$
we obtain, 
\be\label{boundmathC}
{\mathcal C}(u,m)-{\mathcal C}^* \geq  -h \phi'(0)- \frac{h^2}{2}\|\phi''\|_\infty\qquad \forall h\in [0,\tau_\ep].
\ee

We now apply the above computations to two particular cases, one to get the lower bound involving $F$ and the other one for the lower bound involving $G$.  For $\ep\in (0,(T-t_0)/2)$, let us set: 
\be\label{defmumu}
\mu(t,y):= -\gamma(t)m(t,y) \int_{\T^d} \frac{\delta L}{\delta m}(x, \alpha^*(t,x), y,m(t)) m(t,x)dx, 
\ee
where 
$$
\gamma(t):=\left\{\begin{array}{ll}
1 & {\rm if }\; t\in [t_0+\ep, T-\ep]\\
0  & {\rm if }\; t\in [t_0,t_0+\ep/2]\\
2(t-\ep/2)/\ep & {\rm if }\; t\in [t_0+\ep/2,t_0+\ep]\\
(T-t)/\ep & {\rm if }\; t\in [T-\ep,T]
\end{array}\right.
$$
By Convention \eqref{eq:convention},  
we have $\inte \mu(t,y)dy=0$, so that we can find a continuous map $\beta=\beta(t,x)$ such that \eqref{eq.linbetamu} holds. 
Note that $\mu$ is uniformly bounded by a constant $C_1$ in $L^\infty$ independently of $\ep$ and therefore we can choose $\tau_\ep= 1/(C_0C_1)$. Let us define $\phi$ as above. Then $\phi$ is of class $C^{1,1}$ with $\|\phi''\|_\infty\leq C_2$, where $C_2$ depends only on $\ep$ and on the regularity of $H$ and $F$ and $C_0$.

By \eqref{phiprim0} and the definition of $\mu$ in \eqref{defmumu} we have 
\begin{align*}
\phi'(0) & =   - \int_{t_0}^{T}\gamma(t) \inte \left[ \inte  \frac{\delta L}{\delta m}(x, \alpha^*(t,x),y,m(t)) m(t,x)dx\right]^2  m(t,y)dydt \\
& \leq - \int_{t_0+\ep}^{T-\ep} \inte \left[ \inte  \frac{\delta L}{\delta m}(x, \alpha^*(t,x),y,m(t))  m(t,x)dx\right]^2  m(t,y)dydt. 
\end{align*}
Thus, applying \eqref{boundmathC} with $h=\min\{\tau_\ep, -\phi'(0)/\|\phi''\|_\infty\}$, we obtain our first lower bound: 
$$
{\mathcal C}-{\mathcal C}^* \geq C_\ep^{-1}\int_{t_0+\ep}^{T-\ep} \inte \left[ \inte  \frac{\delta L}{\delta m}(x, \alpha^*(t,x),y,m(t))  m(t,x)dx\right]^2  m(t,y)dydt,
$$
for some constant $C_\ep$ depending on the data, on $m_0$ and on $\ep$. \\

In order to obtain the lower bound involving $G$, we choose
\be\label{defmumu2}
\mu(t,y):=- \gamma(t)m(T,y) \int_{\T^d} \frac{\delta G}{\delta m}(x, m(T), y) m(T,x)dx, 
\ee
where 
$$
\gamma(t):=\left\{\begin{array}{ll}
0 & {\rm if }\; t\in [t_0, T-\ep]\\
(t-(T-\ep))/\ep & {\rm if }\; t\in [T-\ep,T]
\end{array}\right.
$$
where $\ep\in (0,T/2)$ is small. Note that we can choose the lower bound $C_\ep$ such that $m\geq C_\ep^{-1}$ on $[T/2,T]$ independent of $\ep$. Hence the constant $\tau_\ep:= 1/(2C_\ep\|\mu\|_\infty)$ does not depend on $\ep$ either, and we call it $\tau_0$. 

As before we  can find a continuous map $\beta=\beta(t,x)$ such that \eqref{eq.linbetamu} holds. As $\|\gamma'\|_\infty\leq T/\ep$, we have $\|\beta\|_\infty\leq C/\ep$ where $C$ depends on the  regularity of $G$ only. Moreover $\mu$ is bounded in $L^\infty$ and, therefore, $\|\alpha_h\|_\infty\leq C/\ep$.

Let $\phi$ be associated to $(\mu,\beta)$ as above. Then, by \eqref{eq.phisecon} and our growth assumptions \eqref{hyp.growth1}, \eqref{hyp.growth2}, we have $\|\phi''\|_\infty\leq C/\ep^2$. On the other hand, from the choice of $\mu$ and \eqref{phiprim0}, 
\begin{align*}
\phi'(0) & =   \int_{T-\ep}^{T}\gamma(t)   \inte \inte \frac{\delta L}{\delta m}(x, \alpha^*(t,x),m(t),y) \mu(t,y) m(t,x)dydxdt\\
& \qquad \ds + \inte \inte \gamma(T)\frac{\delta G}{\delta m}(x, m(T),y) \mu(T,y) m(T,x)dydx,\\
& \leq  C\ep - \kappa ,
\end{align*}
with
$$
\kappa:= \inte \left[ \inte  \frac{\delta G}{\delta m}(x, m(T),y) m(T,x)dx\right]^2  m(T,y)dy.
$$
We now use \eqref{boundmathC} to obtain
$$
{\mathcal C}(u,m)-{\mathcal C}^* \geq (\kappa -C\ep)h -\frac{C}{2\ep^2}h^2 \qquad \forall h\in [0,\tau_0].
$$
Choosing $\ep= c\kappa$ (for some constant $c>0$ small enough) and $h= \min\{\tau_0, C^{-1}\kappa^3\}$ (for some large constant $C$ with the same dependence as above), we get our second lower bound:
$$
{\mathcal C}(u,m)-{\mathcal C}^* \geq C^{-1} \kappa^4. 
$$
Putting together our two lower bounds on ${\mathcal C}-{\mathcal C}^*$, we finally obtain Inequality \eqref{LBt}.
\end{proof}

\section{Upper bounds on  ${\mathcal C}-{\mathcal C}^*$.} \label{sec.5}

In order to obtain an upper bound for  ${\mathcal C}-{\mathcal C}^*$, we come back to the case where $H$ is separated, i.e., satisfies \eqref{eq.separate}. 
Let us recall that, in this case, the MFG system becomes \eqref{MFGtSep}. Recall the notation 
$$
\hat {\mathcal F}(m):= \inte F(x,m)m(dx), \qquad
\hat {\mathcal G}(m):= \inte G(x,m)m(dx), \; \qquad \forall m\in \Pk.
$$

\begin{Theorem}\label{thm:UB} Under our standing assumptions, assume that  the initial condition $m_0$  has a smooth and positive density. Assume in addition that the maps $\hat {\mathcal F}$ and $\hat {\mathcal G}$ 
are convex  on $\Pk$. Let  $(u,m)$ be a solution to the MFG system \eqref{MFGtSep} starting from $(t_0,m_0)$. Then
we also have the upper bound:
\begin{align}
{\mathcal C}( u, m)-{\mathcal C}^*\leq & \;\;
 C \Bigl(\int_{t_0}^T \inte \left[ \inte  \frac{\delta F}{\delta m}(x ,y,m(t)) m(t,x)dx\right]^2  dydt \label{UBt}\\
&\qquad \qquad\qquad\qquad  +   \inte \left[ \inte  \frac{\delta G}{\delta m}(x, m(T),y) m(T,x)dx\right]^2  dy \Bigr)^{1/2},
\notag
\end{align}
where the constants $C\geq 1$  depends on the regularity of $H_0$, $F$, $G$ and on $m_0$.
\end{Theorem}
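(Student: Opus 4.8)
The plan is to recast ${\mathcal C}(u,m)$ as the value of a convex control problem in which the coupling is frozen along the equilibrium flow. Fix the MFG solution $(u,m)$ of \eqref{MFGtSep}, set $w^*:=-mD_pH(x,Du)$, and consider, over all $(m',w')$ satisfying \eqref{eq:ConstrT},
$$
\widetilde J(m',w'):=\int_{t_0}^T\inte\Bigl[L_0\bigl(x,\tfrac{w'}{m'}\bigr)+F(x,m(t))\Bigr]m'\,dxdt+\inte G(x,m(T))m'(T,x)dx.
$$
First I would prove the verification inequality $\widetilde J(m',w')\ge\inte u(t_0,x)m_0(x)dx$ for every admissible $(m',w')$: differentiate $t\mapsto\inte u(t,\cdot)m'(t,\cdot)$, use the continuity equation together with the Hamilton--Jacobi equation $\partial_tu+\Delta u=H_0(x,Du)-F(x,m(t))$ to cancel the frozen $F$-terms, and close with the Fenchel inequality $L_0(x,w'/m')+H_0(x,Du)\ge-Du\cdot w'/m'$. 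Equality holds exactly at $(m,w^*)$, so that $\widetilde J(m,w^*)={\mathcal C}(u,m)=\inte u(t_0,x)m_0(x)dx$ and $(m,w^*)$ minimizes $\widetilde J$.

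Next I would test this lower bound against the global minimizer $(\hat m,\hat w)$ of $J$ supplied by Lemma \ref{lem.OC}, which realizes ${\mathcal C}^*$. Since $\widetilde J$ and $J$ share the same kinetic term and differ only through the coupling, subtracting ${\mathcal C}^*=J(\hat m,\hat w)$ from $\widetilde J(\hat m,\hat w)\ge{\mathcal C}(u,m)$ yields
\begin{align*}
{\mathcal C}(u,m)-{\mathcal C}^*\le{}&\int_{t_0}^T\Bigl[\inte F(x,m(t))\hat m(t,x)dx-\hat{\mathcal F}(\hat m(t))\Bigr]dt\\
&+\Bigl[\inte G(x,m(T))\hat m(T,x)dx-\hat{\mathcal G}(\hat m(T))\Bigr].
\end{align*}
This is where the convexity of $\hat{\mathcal F}$ and $\hat{\mathcal G}$ becomes indispensable: it is the tool that turns these two differences into the small quantities appearing in \eqref{UBt}. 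For the running term $A(t):=\inte F(x,m(t))\hat m(t,x)dx-\hat{\mathcal F}(\hat m(t))$ I would add and subtract $\hat{\mathcal F}(m(t))=\inte F(x,m(t))m(t,x)dx$ and apply the tangent-plane bound $\hat{\mathcal F}(\hat m(t))\ge\hat{\mathcal F}(m(t))+\inte\frac{\delta\hat{\mathcal F}}{\delta m}(m(t),x)(\hat m(t)-m(t))(dx)$, obtaining
$$
A(t)\le\inte\Bigl[F(x,m(t))-\tfrac{\delta\hat{\mathcal F}}{\delta m}(m(t),x)\Bigr](\hat m(t)-m(t))(dx).
$$
Inserting the expression for $\delta\hat{\mathcal F}/\delta m$ from Lemma \ref{lem:calcul:DF} (the analogue of \eqref{calcul:DF} with $F$ in place of $G$) collapses the bracket, up to an additive constant, to $-\inte\frac{\delta F}{\delta m}(z,m(t),x)m(t,z)dz$; the constant integrates to $0$ against $\hat m(t)-m(t)$, and Convention \eqref{eq:convention} removes the part tested against $m(t)$, leaving $A(t)\le-\inte\Psi^F(t,x)\hat m(t,x)dx$ with $\Psi^F(t,x):=\inte\frac{\delta F}{\delta m}(z,m(t),x)m(t,z)dz$. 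The terminal bracket is treated identically, giving a bound by $-\inte\Psi^G(x)\hat m(T,x)dx$ with $\Psi^G(x):=\inte\frac{\delta G}{\delta m}(z,m(T),x)m(T,z)dz$.

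Finally I would apply Cauchy--Schwarz in space and then in time, $\int_{t_0}^T|\inte\Psi^F(t,x)\hat m(t,x)dx|\,dt\le(\int_{t_0}^T\|\Psi^F(t,\cdot)\|_{L^2}^2dt)^{1/2}(\int_{t_0}^T\|\hat m(t,\cdot)\|_{L^2}^2dt)^{1/2}$ and likewise for $\Psi^G$, then use $\sqrt a+\sqrt b\le\sqrt2\sqrt{a+b}$ to reach exactly \eqref{UBt}. This step requires $\|\hat m(t,\cdot)\|_{L^2}$ bounded uniformly on $[t_0,T]$, which follows from parabolic regularity: $m_0$ is smooth and positive and $\hat m$ solves a uniformly parabolic Fokker--Planck equation with bounded drift $-D_pH_0(x,D\hat u)$, the source of the $\hat u$-equation being bounded by \eqref{hyp.growth1}. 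I expect the main difficulty to be conceptual rather than computational, namely recognizing that ${\mathcal C}(u,m)$ is itself the minimum of the frozen convex problem $\widetilde J$, so that the gap ${\mathcal C}(u,m)-{\mathcal C}^*$ is controlled by evaluating $\widetilde J$ at the planner's optimizer; the convexity of $\hat{\mathcal F}$ and $\hat{\mathcal G}$ is precisely what converts this gap into $\Psi^F$ and $\Psi^G$, and some such hypothesis seems unavoidable, since otherwise the bracketed differences need not be small.
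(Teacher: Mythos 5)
Your proposal is correct, but it proves the theorem by a genuinely different route than the paper. The paper runs the classical Lasry--Lions duality computation $\frac{d}{dt}\inte (u-\hat u)(m-\hat m)$, uses the uniform convexity of $H_0$ from \eqref{cond:Hcoercive} to extract the quantity $\int_{t_0}^T\inte(m+\hat m)|D(u-\hat u)|^2$, invokes the monotonicity of $\delta\hat{\mathcal F}/\delta m$ and $\delta\hat{\mathcal G}/\delta m$ (which, for $C^1$ functionals on $\Pk$, is equivalent to the tangent-plane inequality you use) to bound the coupling terms by $\kappa\,\sup_t\|m(t)-\hat m(t)\|_{L^2}$, and then closes the loop with an $L^2$-stability estimate for the Fokker--Planck equation borrowed from \cite[Lemma 7.6]{cardaliaguet2013long}; this yields the intermediate estimate $\|m-\hat m\|_{L^2}+\|D(u-\hat u)\|_{L^2(m+\hat m)}\le C\kappa$, after which the cost difference is expanded directly. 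You instead exploit the fixed-point structure of the equilibrium: ${\mathcal C}(u,m)$ is the minimal value of the frozen functional $\widetilde J$, a verification argument that is legitimate here since $(u,m)$ is a classical solution and the only competitor you actually need, $(\hat m,\hat w)$, is smooth by Lemma \ref{lem.OC}; evaluating $\widetilde J$ at the planner's optimizer gives the one-sided bound ${\mathcal C}(u,m)-{\mathcal C}^*\le\widetilde J(\hat m,\hat w)-J(\hat m,\hat w)$, and your use of convexity together with the $F$-analogue of Lemma \ref{lem:calcul:DF} and Convention \eqref{eq:convention} correctly collapses this gap to the terms $-\inte\Psi^F\hat m$ and $-\inte\Psi^G\hat m(T)$ (the constant term indeed dies against $\hat m-m$, and the part tested against $m(t)$ vanishes by \eqref{eq:convention}). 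What each approach buys: yours is shorter and more elementary, avoiding the Fokker--Planck stability lemma and any quantitative use of \eqref{cond:Hcoercive}, and requiring only $\sup_t\|\hat m(t)\|_{L^2}<\infty$ --- a hypothesis on the same footing as the paper's own unproved invocation of $\hat m\in L^\infty$, justified by the same parabolic-regularity reasoning you sketch; the paper's heavier route yields as a by-product the norm estimate $\|m-\hat m\|_{L^2}+\|D(u-\hat u)\|\le C\kappa$, i.e., closeness of the equilibrium to the social optimum in a strong sense and not merely in cost, which is of independent interest. Both arguments use the convexity of $\hat{\mathcal F}$ and $\hat{\mathcal G}$ in equivalent ways and arrive at the same exponent $1/2$ in \eqref{UBt}.
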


\begin{Remark}\label{rem.rem1}{\rm  The result can actually be generalized to   MFG systems with non separated Hamiltonian. However, in this case, the convexity condition has to be stated on the map 
$$
(m,w) \to \inte L(x,w/m,m)dm.
$$
However, as this later condition seems very restrictive, we have chosen to state the result for separated Hamiltonians. 
}\end{Remark}

\begin{proof}[Proof of Theorem \ref{thm:UB}] We compute as usual (see \cite{lasry2007mean})
$$
\frac{d}{dt} \int_{\T^d} (u-\hat u)(m-\hat m).
$$
We have, since $m(0)=\hat m(0)=m_0$,  
$$
\begin{array}{l}
\ds 0 = \int_{t_0}^T\int_{\T^d} m(H_0(x,D\hat u)- H_0(x,Du)-D_pH_0(x,Du)\cdot (D\hat u-Du) dxdt\\
\qquad \ds + \int_{t_0}^T\int_{\T^d} \hat m(H_0(x,Du)- H_0(x,D\hat u)-D_pH_0(x,D\hat u)\cdot (D u-D\hat u)dxdt \\
\qquad \ds + \int_{t_0}^T\int_{\T^d} \left(F(x,m(t))-\frac{\delta \widehat{ {\mathcal F}}}{\delta m}(\hat m(t),x)\right)(m(t,x)-\hat m(t,x))dxdt\\
\qquad \ds  + \int_{\T^d} \left(G(x,m(T))-\frac{\delta \widehat{ {\mathcal G}}}{\delta m}(\hat m(T),x)\right)(m(T,x)-\hat m(T,x))dx.
\end{array}
$$
Using the  uniform convexity of $H_0$, we find: 
\be\label{jlhbfzsd}
\begin{array}{l}
\ds C^{-1}\int_{t_0}^T \int_{\T^d} (m(t,x)+\hat m(t,x))|Du-D\hat u|^2dxdt \\
\ds \qquad \leq - \int_{t_0}^T\int_{\T^d} \left(F(x,m(t))-\frac{\delta \widehat{{\mathcal F}}}{\delta m}(\hat m(t),x)\right)(m(t,x)-\hat m(t,x))dxdt\\
\ds \qquad\qquad  -\int_{\T^d} \left(G(x,m(T))-\frac{\delta \widehat{{\mathcal G}}}{\delta m}(\hat m(T),x)\right)(m(T,x)-\hat m(T,x))dx,
\end{array}
\ee
where $C$ depends on a lower bound of $D^2_{pp}H_0$ in \eqref{cond:Hcoercive}. 
By \eqref{calcul:DF} and using the fact that $m(t)$ and $\hat m(t)$ are probability measures, we have 
$$
\begin{array}{l}
\ds C^{-1}\int_{t_0}^T \int_{\T^d} (m(t,x)+\hat m(t,x))|Du-D\hat u|^2dxdt \\
\ds  \leq - \int_{t_0}^T\int_{\T^d} \left(\frac{\delta \widehat{{\mathcal F}}}{\delta m}(m(t),x) -\int_{\T^d} \frac{\delta F}{\delta m}(y, m(t),x) m(t,y)dy - \frac{\delta \widehat{{\mathcal F}}}{\delta m}(\hat m(t),x) \right)(m(t,x)-\hat m(t,x))dxdt\\
\ds  \qquad  - \int_{\T^d} \left(\frac{\delta \widehat{{\mathcal G}}}{\delta m}(m(T),x) -\int_{\T^d} \frac{\delta G}{\delta m}(y, m(T),x) m(T,y)dy - \frac{\delta \widehat{{\mathcal F}}}{\delta m}(\hat m(T),x) \right)(m(T,x)-\hat m(T,x))dx.
\end{array}
$$
As $\hat {\mathcal F}$ and $\hat {\mathcal G}$ are convex, and thus $\frac{\delta \widehat{{\mathcal F}}}{\delta m}(\hat m,x)$ and $\frac{\delta \widehat{{\mathcal G}}}{\delta m}(\hat m,x)$ are monotone, we obtain: 
$$
\begin{array}{l}
\ds C^{-1}\int_{t_0}^T \int_{\T^d} (m(t,x)+\hat m(t,x))|Du(t,x)-D\hat u(t,x)|^2dxdt \\
\ds \qquad \leq \int_{t_0}^T\int_{\T^d\times \T^d} \frac{\delta F}{\delta m}(y, m(t),x) m(t,y)(m(t,x)-\hat m(t,x))dydxdt \\
\ds \qquad\qquad +\int_{\T^d\times \T^d} \frac{\delta G}{\delta m}(y, m(T),x) m(T,y)(m(T,x)-\hat m(T,x))dydx \\
\ds \qquad\leq C  \kappa \sup_{t\in [0,T]} \|\hat m(t)-m(t)\|_{L^2(\T^d)}
\end{array}
$$
where 
$$
\kappa:=  \left(\inte \int_{t_0}^T    \left[\inte \frac{\delta F}{\delta m}(y, m(t),x) m(t,y)dy\right]^2 dt+ 
\left[\inte \frac{\delta G}{\delta m}(y, m(T),x) m(T,y)dy \right]^2\ dx \right)^{1/2}.
$$
The map $\mu(t,x):= \hat m(t,x)-m(t,x)$ solves 
$$
\partial_t \mu -\Delta \mu -\dive ( \mu D_pH_0(x, Du))= \dive( \hat m (D_pH_0(x,D\hat u)-D_pH_0(x,Du)))
$$
with initial condition $\mu(0,\cdot)=0$. So, following \cite[Lemma 7.6]{cardaliaguet2013long}, we have, for any $t\in [0,T]$,  
\begin{align*}
\| \mu(t,\cdot)\|_{L^2(\T^d)}  & \leq C \left(\int_{t_0}^T\inte \hat m^2(s,x) |D_pH_0(x,D\hat u(s,x))-D_pH_0(x,Du(s,x))|^2dxds\right)^{1/2} \\
& \leq C \left(\int_{t_0}^T\inte (\hat m(s,x)+m(s,x)) |D(\hat u-u)(s,x)|^2dxds\right)^{1/2},
\end{align*}
because $\hat m$ is bounded in $L^\infty$ by a constant which depends on the regularity of the data and of $m_0$. 
Combining the last set of inequalities, we find 
\begin{align*}
& \|\hat m-m\|_{L^2([0,T]\times \T^d)}+ \|D(\hat u-u)\|_{L^2([0,T], L^2_{m(t)+\hat m(t)}(\T^d))} \leq C\kappa.
\end{align*}
We are now in position to compare  ${\mathcal C}(u,m)$ and $ {\mathcal C}^*$:
\begin{align*}
& {\mathcal C}(u,m)  ={\mathcal C}^*+ \int_{t_0}^T\int_{\T^d} \left\{ L(x,-D_pH_0(x,D \hat u))+ F(x, \hat m(t))\right\}(m-\hat m)dxdt \\
&\;   +   \int_{t_0}^T\int_{\T^d} \left\{ L(x,-D_pH_0(x,D u))-L(x,-D_pH_0(x,D \hat u))+ F(x, m(t))-F(x, \hat m(t))\right\} m dxdt \\
&\; \leq C ( \| m-\hat m\|_{L^2([0,T]\times \T^d)} +  \|D(\hat u-u)\|_{L^2([0,T], L^2_{m(t)}(\T^d))}) \leq C\kappa.
\end{align*}
This proves the result.
\end{proof}

\section{Examples}\label{sec.examples}

Throughout this part,  we assume to fix the ideas that $H=H_0(x,p)-F(x,m)$ is separated (i.e., satisfies \eqref{eq.separate}). To simplify the expressions, we also suppose that $t_0=0$ and $G=0$. Let us recall that condition \eqref{eq:condefficient} characterizes the fact that the MFG system is globally efficient: for any initial distribution $m_0$, there exists an efficient MFG equilibrium, i.e., a solution $(u,m)$ to \eqref{MFGt} such that ${\mathcal C}(u,m)={\mathcal C}^*$.  Our first example shows that there are MFG systems which are globally efficient. However, the other examples show that this is seldom case for many standard classes of coupling functions. 

\begin{Example}\label{ex0} {\rm Let us recall that given a $C^2$ map ${\mathcal F}:\Pk\to \R$ and defining the coupling function $F$  by
$$
F(x,m):= {\mathcal F}(m)+ \frac{\delta {\mathcal F}}{\delta m} (m,x)\qquad \forall (x,m)\in \T^d\times \Pk, 
$$
the MFG system \eqref{MFGtSep} is globally efficient (Theorem \ref{thm.eff}). We now prove that,  if ${\mathcal F}$  is not affine in $m$, then $F$  genuinely depends on $m$: indeed, if $F$ does not depend on $m$, there exists a  map $f:\T^d\to \R$ with
$$
 {\mathcal F}(m)+ \frac{\delta {\mathcal F}}{\delta m} (m,x)=f(x). 
 $$
Integrating against $m$ and using Convention \eqref{eq:convention}, this implies that 
 $$
 {\mathcal F}(m)=\inte f(x)m(dx), 
 $$
which is affine in $m$. 

For instance, let $\phi:\T^d\times \T^d\to \R$ be a non vanishing map and set ${\mathcal F}$ as
$$
{\mathcal F}(m)=\int_{\T^d\times \T^d} \phi(x,y)m(dx)m (dy). 
$$
Then the coupling
\begin{align*}
F(x,m) & =  {\mathcal F}(m)+ \frac{\delta  {\mathcal F}}{\delta m} (m,x)\\
& = \inte \phi(x,y)m(dy)+\inte \phi(z,x)m(dz)-\int_{\T^d\times \T^d} \phi(z,y)m(dz)m (dy)
\end{align*}
satisfies \eqref{eq:condefficient} and depends on $m$ as soon as $\phi=\phi(x,y)$ genuinely depends on $x$ and $y$.
}\end{Example}

\begin{Example}\label{ex1}{\rm Let us now suppose that $F=F(m)$ does not depend on $x$. Then
\be\label{ilnzrsdxc}
\inte  \frac{\delta F}{\delta m}(m,y) m(dx)= \frac{\delta F}{\delta m}(m,y).
\ee
Hence the associated MFG system if globally efficient (i.e., \eqref{eq:condefficient} holds) if  only if $\frac{\delta F}{\delta m}(m,x)$ vanishes identically, in which case $F$ is constant. 

Moreover, if  $(u,m)$ is a MFG equilibrium, then we have, by \eqref{ilnzrsdxc} and the fact that $\partial_t m$ is uniformly bounded by the regularity of the data,  
\begin{align*}
\left| F(m(t_2))-F(m(t_1))\right| & =\left| \int_{t_1}^{t_2}\inte  \left(\inte  \frac{\delta F}{\delta m}(m(t),y) m(t,x)dy\right)\partial_tm(t,y)dydt\right|  \\
& \leq C(t_2-t_1)^{1/2} \left( \int_0^T\int_{\T^d} \left[\inte \frac{\delta F}{\delta m}(x, m(t),y) m(t,x)dx\right]^2dydt\right)^{1/2},
\end{align*}
so that our bound from below can be rewritten in term of the modulus of Holder continuity of $F$ along the trajectory $m$: for any $\ep\in (0,T/2)$, 
$$
{\mathcal C}( u, m)-{\mathcal C}^*\geq 
C_\ep^{-1}\left\{ \sup_{t_1\neq t_2} \frac{\left| F(m(t_2))-F(m(t_1))\right|}{(t_2-t_1)^{1/2}} \right\}^4.
$$
where the supremum is taken over $t_1, t_2\in [\ep, T-\ep]$. 
}\end{Example}

\begin{Example}\label{ex3}{\rm We now assume that $F$ derives from a potential: There exists a $C^1$ map $\Phi:\Pk\to \R$ such that $F=\delta \Phi/\delta m$. Note that, in this setting, we have 
$$
\hat {\mathcal F}(m)= \inte \frac{\delta \Phi}{\delta m}(m,x)m(dx) = 0\qquad \forall m\in \Pk.
$$
In particular, $\hat {\mathcal F}$ is convex. 
Then, by Lemma \ref{lem:calcul:DF}, 
$$
\inte \frac{\delta F}{\delta m}(x, m,y) m(dx)
= -F(y,m) \qquad \forall (y,m)\in \T^d\times \Pk.
$$
This implies that the  MFG system associated with $F$ if globally efficient  if and only if $F$ vanishes identically. 

Moreover,  if $(u,m)$ is a MFG equilibrium, 
$$
\int_0^T \inte \left[ \inte  \frac{\delta F}{\delta m}(x, m(t),y) m(t,x)dx\right]^2  dydt = 
\int_0^T \inte \left[   F(y,m(t)) \right]^2  dydt.
$$
So our estimates simply read: 
\begin{align*}
C_\ep^{-1}\left(\int_\ep^{T-\ep} \inte \left[   F(y,m(t))\right]^2  dydt\right)^2  \leq \; {\mathcal C}( u, m)-{\mathcal C}^*
\leq C\left(\int_0^T \inte \left[   F(y,m(t)) \right]^2  dydt\right)^{1/2}.
\end{align*}
}\end{Example}

\begin{Example}\label{ex2}{\rm Finally we suppose that $F$ is of the form $\ds \ F(x,m)=\int_{\T^d} \phi(x,y)m(dy) \ $
for some smooth map $\phi:\T^d\times \T^d\to \R$. Then $\ \ds \frac{\delta F}{\delta m}(x,m,y)= \phi(x,y)-\inte \phi(x,z)m(dz), \ $ so that 
$$
\inte \frac{\delta F}{\delta m}(x, m,y) m(dx)= \inte \left(\phi(x,y)- \inte \phi(x,z)m(dz)\right) m(dx). 
$$
Hence the  MFG system associated with $F$ is globally efficient if and only if 
$$
 \inte\phi(x,y)m(dx) = \int_{\T^d\times \T^d} \phi(x,z)m(dz) m(dx)\qquad \forall (y, m)\in \T^d\times \Pk,
 $$
 which implies (by choosing $m$ to be a Dirac mass), that $\phi$ does not depend on $y$. In other words, $F$ does not depend on $m$. 
 }\end{Example}


\end{document}